\documentclass[sn-mathphys]{sn-jnl}

\usepackage{epstopdf}
\usepackage{lmodern}
\usepackage{graphicx}

\theoremstyle{thmstyleone}%
\newtheorem{theorem}{Theorem}
\newtheorem{proposition}[theorem]{Proposition}
\newtheorem{lem}[theorem]{Lemma}
\newtheorem{cor}[theorem]{Corollary}
\newtheorem{definition}{Definition}

\theoremstyle{thmstyletwo}%
\newtheorem{remark}{Remark}%

\begin{document}

\title[Splitting Gibbs Measures for Triple Mixed-Spin Ising Model]{Splitting Gibbs Measures for a Periodic Triple Mixed-Spin Ising Model on a Cayley Tree}


\author[1]{\fnm{Farrukh} \sur{Mukhamedov}}\email{far75m@yandex.ru; farrukh.m@uaeu.ac.ae}

\author[2]{\fnm{Muzaffar} \sur{Rahmatullaev}}\email{mrahmatullaev@rambler.ru}
\equalcont{These authors contributed equally to this work.}

\author[3]{\fnm{Obid} \sur{Karshiboev}}\email{okarshiboevsher@mail.ru}
\equalcont{These authors contributed equally to this work.}

\affil[1]{\orgdiv{Department of Mathematical Sciences}, \orgname{College of Science}, \orgaddress{\street{Street}, \city{Al Ain Abu Dhabi}, \postcode{15551}, \state{UAE}, \country{The United Arab Emirates University}}}

\affil[2]{\orgdiv{V.I. Romanovskiy Institute of Mathematics}, \orgname{Uzbekistan Academy of Sciences}, \orgaddress{\street{University str, 4-b}, \postcode{100174}, \state{Tashkent}, \country{Uzbekistan}}}

\affil*[3]{\orgdiv{Mathematics and Information Technology}, \orgname{Oriental University, Samarkand}, \orgaddress{\street{Spitamen shoh str. 139}, \city{Samarkand}, \postcode{140102}, \state{Samarkand}, \country{Uzbekistan}}}


\abstract{
We consider an Ising model on the Cayley tree $\Gamma_k$ of arbitrary order
$k\ge1$ with three spin species of values $(\tfrac12,1,\tfrac32)$ distributed
deterministically with period three along the generations. Within the framework
of splitting Gibbs measures, we derive the exact boundary-law compatibility
equations and characterize translation-invariant splitting Gibbs measures
(TISGMs) via a finite system of algebraic relations. In the ferromagnetic
regime $J>0$, writing $\theta=\exp(\beta J/2)$, we further reduce the
translation-invariant problem to a one-dimensional scalar fixed-point equation
$x=f(x,\theta,k)$ for a rational map $f$. We show that $f$ is strictly increasing and
obtain an explicit sufficient condition for phase coexistence: if
$s_k(\theta)=f'(1,\theta,k)-1>0$, then $x=f(x,\theta,k)$ admits at least three
distinct positive solutions, yielding at least three distinct TISGMs and hence
a phase transition driven by the periodic inhomogeneity of the spin structure.

For the binary tree $k=2$ we exploit attractiveness to construct plus and minus
Gibbs measures as weak limits with extremal boundary conditions, prove that
they are TISGMs corresponding to the minimal and maximal fixed points of
$f(\cdot,\theta,2)$, and show that they are the minimal and maximal Gibbs
measures in the natural stochastic order. Finally, we construct the
tree-indexed Markov chain associated with a TISGM and apply the
Kesten--Stigum criterion to the disordered TISGM, identifying nonempty
parameter regions where this measure is non-extremal and reconstruction occurs.
}

\keywords{Ising model, triple mixed spin, Cayley tree, splitting Gibbs measure,
phase transition, extremality}



\maketitle

\section{Introduction}\label{sec0}

The Ising model remains one of the most fundamental frameworks in statistical mechanics for understanding cooperative phenomena and phase transitions in interacting systems. Since its introduction in the early twentieth century, it has inspired a vast body of work ranging from exact solutions on regular lattices to rigorous probabilistic approaches on tree-like structures such as Cayley trees and Bethe lattices \cite{Ising,G,Ro}.

In recent decades, \emph{mixed-spin Ising models}, in which different lattice sites host spins of unequal magnitude, have attracted sustained interest. These models provide more realistic descriptions of ferrimagnetic materials and molecular magnets and are known to exhibit richer critical behavior than their single-spin counterparts \cite{Ekiz2009,Jiang2003,Bouda,QMA25}. Numerous analytical and numerical techniques have been applied to mixed-spin systems, including exact recursion relations, effective-field and mean-field theories, and Monte Carlo simulations \cite{Kaneyoshi1993,Tucker2000,Bobak1998,Albayrak2010}.

A further level of complexity arises in \emph{ternary or triple mixed-spin Ising models}, where three distinct spin values coexist. Such models have been proposed as idealized descriptions of Prussian blue analogs and other molecular-based magnetic materials in which ferro- and ferrimagnetic interactions compete \cite{Bobak2002,Ohkoshi1997,Mallah1993}. Several triple mixed-spin models have been investigated on the Bethe lattice, revealing compensation points, magnetization reversals, and multicritical behavior \cite{Canko2008,Albayrak2011,Albayrak2020,Deviren2009,Deviren2010,KMU24}. However, most of these studies rely on approximate or numerical approaches and focus on translationally homogeneous environments.

From a rigorous mathematical perspective, Ising-type models on Cayley trees display behavior that sharply contrasts with that on regular lattices. In particular, the absence of loops allows for an exact characterization of Gibbs measures via compatibility conditions and recursive equations. While the classical Ising model on a Cayley tree exhibits phase transitions only in the ferromagnetic regime, mixed-spin models may admit multiple Gibbs measures even in antiferromagnetic settings \cite{akin2018gibbs,akin20231,akin20232,akin2024,MRE23}. Moreover, beyond the mere existence of multiple Gibbs measures, the \emph{extremality or non-extremality} of these measures plays a crucial role in understanding reconstruction problems, information flow on trees, and the stability of disordered phases \cite{Mossel,Martinelli,M22}.

In this paper, we study a \emph{triple mixed-spin Ising model with spin values $(1/2,1,3/2)$} defined on a semi-infinite Cayley tree of arbitrary order $k$. Unlike previously studied ternary mixed-spin models, the spins in our model are arranged \emph{periodically with period three along the generations of the tree}, rather than being uniformly distributed or randomly assigned. This deterministic but inhomogeneous structure leads to a fundamentally different system of recursive equations and allows for new phase-transition phenomena that do not arise in homogeneous mixed-spin settings.

Using the framework of \emph{splitting Gibbs measures}\cite{HK21,Ro}, we derive the exact compatibility equations associated with the model and analyze their translation-invariant solutions. We establish explicit sufficient conditions for the existence of multiple translation-invariant splitting Gibbs measures, thereby identifying regions of phase transitions in the parameter space.
The main result is formulated as follows (see section 4):

\begin{theorem}\label{thm:intro-phase}
Let $k\ge2$ and $J>0$, and put $\theta=\exp(\beta J/2)$.
Consider the triple mixed-spin Ising model with spin values
$\bigl(\tfrac12,1,\tfrac32\bigr)$ on the Cayley tree $\Gamma_k$, where the
spin species $\Psi,\Phi,\Upsilon$ are arranged periodically with period three
along the generations.

Let $f(\cdot,\theta,k)$ be the scalar map defined in \eqref{eq14} and set
\[
  s_k(\theta)\;=\;f'(1,\theta,k)-1\,.
\]
If $s_k(\theta)>0$, then the fixed-point equation
\[
  x \;=\; f(x,\theta,k)
\]
admits at least three distinct positive solutions. Consequently, for every such
pair $(\theta,k)$ the model admits at least three distinct translation-invariant
splitting Gibbs measures.
\end{theorem}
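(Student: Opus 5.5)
Since $f$ in \eqref{eq14} is only available to us as the reduced translation-invariant map, the plan uses the structural facts about it stated in the abstract: $f(\cdot,\theta,k)$ is a rational map on $(0,\infty)$, it is strictly increasing, and $x=1$ is a fixed point. The point $x=1$ is the disordered solution, where the boundary laws are symmetric under spin reversal. The argument is then a standard intermediate value argument for an increasing map with bounded range.

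\emph{Step 1.} We check from \eqref{eq14} that $f(1,\theta,k)=1$. This follows from the $\pm$ symmetry of the spin values $\pm\tfrac12,\pm1,\pm\tfrac32$: setting all ratios of boundary-law components equal to $1$ makes numerator and denominator coincide.

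\emph{Step 2.} We show that $f$ maps $(0,\infty)$ into a compact interval $[m,M]\subset(0,\infty)$. Since $f$ is a composition of rational maps whose numerator and denominator are sums of positive terms $\theta^{c}x^{d}$, the limits
\[
f(0^+,\theta,k)=:m>0,\qquad f(+\infty,\theta,k)=:M<\infty
\]
exist and are finite and positive. Each such limit is a ratio of leading (resp.\ lowest-order) coefficients, and these coefficients are positive powers of $\theta$. In particular $f(x)-x>0$ for small $x>0$ and $f(x)-x<0$ for large $x$.

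\emph{Step 3.} We use the hypothesis $s_k(\theta)>0$, i.e.\ $f'(1)>1$. Put $g(x)=f(x,\theta,k)-x$, so that $g(1)=0$ and $g'(1)>0$. Then there is $\varepsilon>0$ with $g(1-\varepsilon)<0<g(1+\varepsilon)$. By the intermediate value theorem on $(0,1-\varepsilon)$, where $g$ passes from positive to negative, there is a root $x_1<1$. On $(1+\varepsilon,\infty)$, where $g$ passes from positive to negative, there is a root $x_2>1$. Together with $x_0=1$ this gives three distinct positive fixed points.

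\emph{Step 4.} We translate the fixed points back into measures. By the earlier characterization of TISGMs, each positive solution of $x=f(x)$ determines, through the back-substitution formulas leading to \eqref{eq14}, a solution of the finite algebraic system. This back-substitution is injective because the intermediate maps are strictly monotone. Hence each solution gives a boundary law and a TISGM. Distinct boundary laws, not proportional to one another, give distinct measures. So we obtain at least three distinct TISGMs.

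\emph{Expected obstacle.} The delicate point is Step 2 together with the injectivity in Step 4. We must verify from the explicit form of \eqref{eq14} that the degrees of numerator and denominator match, so that $M<\infty$ and $m>0$. If the degrees differ, we would instead compare growth rates, e.g.\ show that $f(x)/x\to0$ as $x\to\infty$, and argue similarly at $0$. We would try the leading-coefficient computation first.
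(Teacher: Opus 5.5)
Your proof is correct and follows the argument the paper sets up. Steps 1--2 are exactly Lemma~\ref{lem:basic-F}(iii) and (ii). There, finiteness and positivity of $L_0,L_\infty$ come from $t,u,v$ having finite positive limits: $\theta,\theta^{-1},\theta^{-2}$ as $x\downarrow0$ and $\theta^{-1},\theta,\theta^{2}$ as $x\uparrow\infty$. Since $y,z,f$ are continuous positive functions of $t,u,v$, the degree-matching worry in your last paragraph never arises.

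The paper does not write out the last step of Theorem~\ref{thm10}. Its Propositions~\ref{prop:global-iter}--\ref{prop:linear-stability} point to a monotone-iteration version: orbits started at $1\pm\delta$ move monotonically away from the repelling point $1$ and converge inside $I(\theta,k)$. Your intermediate-value argument reaches the same three roots without needing the strict monotonicity of $f$.
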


This instability criterion $s_k(\theta)>0$ is in the spirit of classical
linearization arguments around the symmetric fixed point used for mixed-spin
models on trees, but here the period-$3$ triple-spin structure leads to a
richer dependence on $(\theta,k)$ than in two-sublattice models. Numerical
evaluation of $s_k(\theta)$ shows that the region $\{\theta:s_k(\theta)>0\}$
widens as $k$ increases, indicating that the periodic inhomogeneity promotes
phase coexistence over a larger parameter range.

For the binary tree $k=2$ we exploit attractiveness to construct plus and minus
Gibbs measures as limits with all boundary spins fixed to their maximal or
minimal values. We prove that these measures $\mu^-$ and $\mu^+$ are TISGMs,
identify them with the extremal fixed points $x^-(\theta)$ and $x^+(\theta)$ of
the scalar recursion, and show that they are the minimal and maximal Gibbs
measures in the natural stochastic order (Proposition~12 and Lemma~13).
Under the assumption that $x=f(x,\theta,2)$ has exactly three positive fixed
points $x^-(\theta)<1<x^+(\theta)$, we obtain a full classification of
TISGMs and of ordered phases on the binary tree (Theorem~14) \cite{BLG,BRZ1}.

A second main direction of the paper is the extremality analysis of the
disordered phase $\mu_0$. To this end, we construct the tree-indexed Markov
chain induced by a TISGM and compute the effective $2\times2$ transition
matrix $H(\theta,k)$ for $\Psi$-spins along one period of the tree, obtained as
a product of three stochastic matrices associated with the levels
$\Psi\to\Phi\to\Upsilon\to\Psi$. Writing $\lambda_{0,k}$
for the second eigenvalue (in absolute value) of $H(\theta,k)$ and
\[
g_k(\theta):=k^3\lambda_{0,k}^2-1,
\]
the Kesten--Stigum bound \cite{KS} yields the following:

\begin{theorem}
Let $k\ge1$ and $J>0$, and write $\theta=\exp(\beta J/2)$. Denote by $\mu_0$
the translation-invariant splitting Gibbs measure corresponding to the fixed
point $x_0=1$ of the recursion~\eqref{67} (the disordered
phase). Let $H=H(\theta,k)$ be the effective $2\times2$ transition matrix from
$\Psi$ to $\Psi$ defined in~\eqref{H}, and let $\lambda_{0,k}$ be its
second eigenvalue in absolute value. If
\[
k^3\,\lambda_{0,k}^2>1,
\]
then the Gibbs measure $\mu_0$ is non-extremal.
\end{theorem}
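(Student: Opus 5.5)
The plan is to view $\mu_0$ as a tree-indexed Markov chain and to reduce non-extremality to a broadcasting problem on an auxiliary regular tree, to which the Kesten--Stigum criterion \cite{KS} applies.

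\textbf{Step 1: the chain.} Since $\mu_0$ is a splitting Gibbs measure with boundary law given by the fixed point $x_0=1$, it is a tree-indexed Markov chain along the generations. Its transitions are
\[
P_{\Psi\Phi},\quad P_{\Phi\Upsilon},\quad P_{\Upsilon\Psi},
\]
one for each pair of consecutive levels. A typical entry has the form $P(i\to j)=\exp(\beta J ij)z_j/\sum_{l}\exp(\beta J il)z_l$, where the $z_l$ are the boundary-law values at $x_0=1$. These are stochastic matrices of sizes $2\times3$, $3\times4$ and $4\times2$ respectively, because the spin species take $2$, $3$ and $4$ values.

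\textbf{Step 2: the coarse-grained tree.} Restrict the chain to the $\Psi$-levels, i.e. to every third generation. Each $\Psi$-vertex has exactly $k^3$ $\Psi$-descendants three generations below. The intermediate paths branch, but the law of the spin at each such descendant, given the root spin, is given by the product
\[
H=P_{\Psi\Phi}P_{\Phi\Upsilon}P_{\Upsilon\Psi},
\]
which is the matrix in \eqref{H}. A broadcasting process on $\Gamma_k$ restricted to the $\Psi$-levels is not literally a Markov chain on the $k^3$-ary tree with i.i.d. edges, since siblings share intermediate ancestors. I would handle this in one of two ways.

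\emph{(a) Direct second-moment estimate.} Run the standard Kesten--Stigum / census argument directly. Take the linear statistic $S_n=\sum_{v}\phi(\sigma_v)$ over the $\Psi$-vertices at depth $3n$, where $\phi$ is the right eigenvector of $H$ for $\lambda_{0,k}$. Then $E[S_n\mid\sigma_{\rm root}]=k^{3n}\lambda_{0,k}^n\phi(\sigma_{\rm root})$. For the variance, sum over pairs of vertices according to their last common ancestor. That ancestor may sit on a $\Phi$- or $\Upsilon$-level, which only contributes bounded constant factors. This gives $\mathrm{Var}(S_n)\le C\sum_m k^{3n}k^{3(n-m)}\lambda_{0,k}^{2(n-m)}$, which is $O\bigl((k^3\lambda_{0,k})^{2n}\bigr)$ when $k^3\lambda_{0,k}^2>1$.

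\emph{(b) Reduction to the classical theorem.} Alternatively, note that reconstruction for the sub-family of $\Psi$-vertices whose paths are disjoint is already enough. Choosing one descendant per branch in each generation yields a subtree in which the coarse-grained $H$-chain is genuinely Markov. This subtree only has branching number $k$ per period, however, so it gives the weaker condition $k\lambda^2>1$. Route (a) is therefore the one I would actually carry out.

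\textbf{Step 3: conclusion.} Chebyshev's inequality then shows that $S_n/(k^3\lambda_{0,k})^n$ separates the two root states. Hence reconstruction is solvable, and by the standard equivalence (reconstruction solvability $\iff$ non-extremality of the tree-indexed Markov chain, see \cite{Mossel}) $\mu_0$ is non-extremal.

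The second eigenvalue of $H$ is real: $H$ is a product of reversible-type kernels, and one can check that it is similar to a symmetric matrix. If this fails, $|\lambda|$ still works after using complex eigenvectors.

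\textbf{Main obstacle.} The hard step is the variance bound in Step 2(a): controlling correlations between vertices whose common ancestor lies on an intermediate level, and checking that they contribute only constant factors. I would write $E[\phi(\sigma_u)\phi(\sigma_w)]$ by conditioning on the spin at the common ancestor. The products of partial matrices involved, for instance $P_{\Upsilon\Psi}$ acting on $\phi$, are bounded uniformly. The resulting geometric series converges precisely when $k^3\lambda_{0,k}^2>1$.
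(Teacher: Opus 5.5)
Your proof is correct. It rests on the same reduction as the paper: pass to the $\Psi$-levels, use the one-period channel $H=\mathcal{P}\mathcal{Q}\mathcal{R}$ with $k^3$ descendants per period, and apply the Kesten--Stigum threshold. The difference is in how the last step is justified.

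The paper asserts that the compressed process is a broadcast on a $k^3$-ary tree with channel $H$ and quotes \cite{KS}. You rightly note that this is not literally true: great-grandchildren sharing a $\Phi$- or $\Upsilon$-ancestor are correlated given their $\Psi$-parent. You then redo the census second-moment estimate by hand. In it, intermediate common ancestors contribute only constants such as $\lambda_{0,k}^{-2}\|\mathcal{Q}\mathcal{R}\phi\|_\infty^2$ and $\lambda_{0,k}^{-2}\|\mathcal{R}\phi\|_\infty^2$, and your variance bound is right.

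The paper's one-liner can be made rigorous more cheaply. The original Kesten--Stigum theorem concerns multitype Galton--Watson processes, where the offspring of a single individual may be arbitrarily dependent. By the Markov property, the $\Psi$-level census is such a two-type process, with bounded offspring and mean matrix $k^3H$. Its eigenvalues are $k^3$ and $k^3\lambda_{0,k}$, so the condition $\lambda_2^2>\lambda_1$ is exactly $k^3\lambda_{0,k}^2>1$. Your computation is what sits inside that theorem. Its advantage is that it is self-contained and shows that sibling correlations change only the second moments of the block offspring law, not the mean matrix that sets the threshold. You are also right that route (b) only yields $k\lambda_{0,k}^2>1$.

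There are two small corrections.

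First, Chebyshev alone does not finish Step 3. With $\tilde S_n=S_n/(k^3\lambda_{0,k})^n$, your bound gives $E[\tilde S_n^2\mid\sigma_{\rm root}]\le C$. However, $C$ is of order $(k^3\lambda_{0,k}^2-1)^{-1}$ and need not be small. Instead, let $\nu_n^{\pm}$ be the conditional laws of the depth-$3n$ configuration given root spin $\pm\tfrac12$. Applying Cauchy--Schwarz against the densities $d\nu_n^\pm/d\bigl(\tfrac12(\nu_n^++\nu_n^-)\bigr)\le 2$ gives $\|\nu_n^+-\nu_n^-\|_{\mathrm{TV}}\ge\bigl(\phi(\tfrac12)-\phi(-\tfrac12)\bigr)^2/(4C)$ uniformly in $n$. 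That is reconstruction.

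Second, the reality of the second eigenvalue needs no argument. For a $2\times2$ stochastic matrix it equals $\mathrm{tr}\,H-1$. At $x_0=1$, spin-flip symmetry gives $H=\begin{pmatrix}a&1-a\\1-a&a\end{pmatrix}$, with $\lambda_{0,k}=2a-1$ and $\phi=(1,-1)^{T}$.
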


Numerical analysis shows that the region $\{\theta:g_k(\theta)>0\}$ is nonempty
for every $k$ considered and tends to expand as $k$ increases. For $k=2$,
combining this with the classification of TISGMs yields a parameter regime
in which the disordered phase $\mu_0$ is non-extremal while the ordered
phases $\mu^-$ and $\mu^+$ remain extremal and exhaust all extremal TISGMs.

Finally, we derive thermodynamic quantities associated with a TISGM,
expressing the finite-volume free energy, the specific free energy, the
sublattice magnetizations on $\Gamma_k^0,\Gamma_k^1,\Gamma_k^2$, and the
magnetization and susceptibility along a typical ray, in terms of the
translation-invariant boundary law. These formulas make explicit how the
period-$3$ inhomogeneity influences macroscopic observables and provide a
bridge between the probabilistic description in terms of Gibbs measures and
the thermodynamic viewpoint based on bulk and ray observables.

The paper is organized as follows. In Section~2 we introduce the model,
the configuration spaces, and the splitting Gibbs measures, and we derive
the boundary-law compatibility system. Section~3 recalls TP2 kernels and
establishes attractiveness of the Gibbs specification, together with the
monotonicity properties of boundary laws. In Section~4 we specialize to
translation-invariant boundary laws and reduce the problem to the scalar
recursion $x=f(x,\theta,k)$. Section~5 is devoted to the dynamical analysis
of this recursion and to the existence of multiple TISGMs via the instability
criterion $s_k(\theta)>0$, with special attention to the binary case $k=2$
and the construction of plus/minus TISGMs. In Section~6 we study the
non-extremality of the disordered phase $\mu_0$ using the Kesten--Stigum
criterion and derive the function $g_k(\theta)$. 

\section{Preliminaries}\label{sec1}

Let $\Gamma^k = (V, L)$ denote a semi-infinite Cayley tree of order $k \geq 1$ with a designated root vertex $x^{(0)}$. Each vertex in the tree has exactly $k+1$ edges, except for the root $x^{(0)}$, which has exactly $k$ edges. Here, $V$ denotes the set of vertices and $L$ denotes the set of edges. Two vertices $x$ and $y$ are said to be \emph{nearest neighbors}, denoted by $l = \langle x, y \rangle$, if there exists an edge in $L$ connecting them.

A sequence of nearest-neighbor pairs $\langle x, x_1 \rangle, \ldots, \langle x_{d-1}, y \rangle$ is called a \emph{path} from vertex $x$ to vertex $y$. The \emph{distance} $d(x, y)$ between two vertices $x, y \in V$ is defined as the length of the shortest path connecting them.

For each non-negative integer $n$, define the following subsets of the Cayley tree:
\[
W_n = \{ x \in V \mid d(x, x^{(0)}) = n \}, \quad
V_n = \bigcup_{m=0}^{n} W_m, \quad
L_n = \{ \langle x, y \rangle \in L \mid x, y \in V_n \}.
\]
The \emph{set of direct successors} of a vertex $x \in W_n$ is defined as
\[
S(x) = \{ y \in W_{n+1} \mid d(x, y) = 1 \}.
\]

We define levels of the tree as follows:
\[
\Gamma_i^k = \{ x \in V \mid d(x^{(0)}, x) \equiv i \pmod{3} \},~~i=0,1,2.
\]

In this work, we consider spin state spaces given by $\Psi = \left\{ -\frac{1}{2},\frac{1}{2} \right\},$ $\Phi = \{-1, 0, 1\},$ $\Upsilon=\{-\frac{3}{2},-\frac{1}{2},\frac{1}{2},\frac{3}{2}\}.$ The associated configuration spaces are defined by
\[
\Omega_0 = \Psi^{\Gamma_0^k},~\Omega_1 = \Phi^{\Gamma_1^k},~\Omega_2 = \Upsilon^{\Gamma_2^k}.
\]
Additionally, we define the \emph{finite-volume configuration spaces} as
\[
\Omega_{0,n} = \Psi^{\Gamma_0^k\cap V_n},~\Omega_{1,n} = \Phi^{\Gamma_1^k\cap V_n},~\Omega_{2,n} = \Upsilon^{\Gamma_2^k\cap V_n}.
\]
The full configuration space of the model is given by
\[
\Xi = \Omega_0 \times \Omega_1\times \Omega_2.
\]
Elements of $\Omega_0$ are denoted by $\sigma(x)$ for $x \in \Gamma_0^k$, and elements of $\Omega_1$ are denoted by $s(x)$ for $x \in \Gamma_1^k$, elements of $\Omega_2$ are denoted by $\varphi(x)$ for $x \in \Gamma_2^k.$ For a configuration $\xi \in \Xi$, the associated spins are assigned to alternating generations of the tree (see Fig. \ref{fig20}). Spins are distributed periodically along the Cayley tree with period three. Vertices at levels congruent to $0 \pmod{3}$ carry spins from
$\Psi=\{-\tfrac12,\tfrac12\}$, vertices at levels congruent to $1 \pmod{3}$ carry spins from $\Phi=\{-1,0,1\}$, and vertices at levels congruent to $2 \pmod{3}$ carry spins from $\Upsilon=\{-\tfrac32,-\tfrac12,\tfrac12,\tfrac32\}$.
\begin{figure}
\centering
\includegraphics[width=10cm]{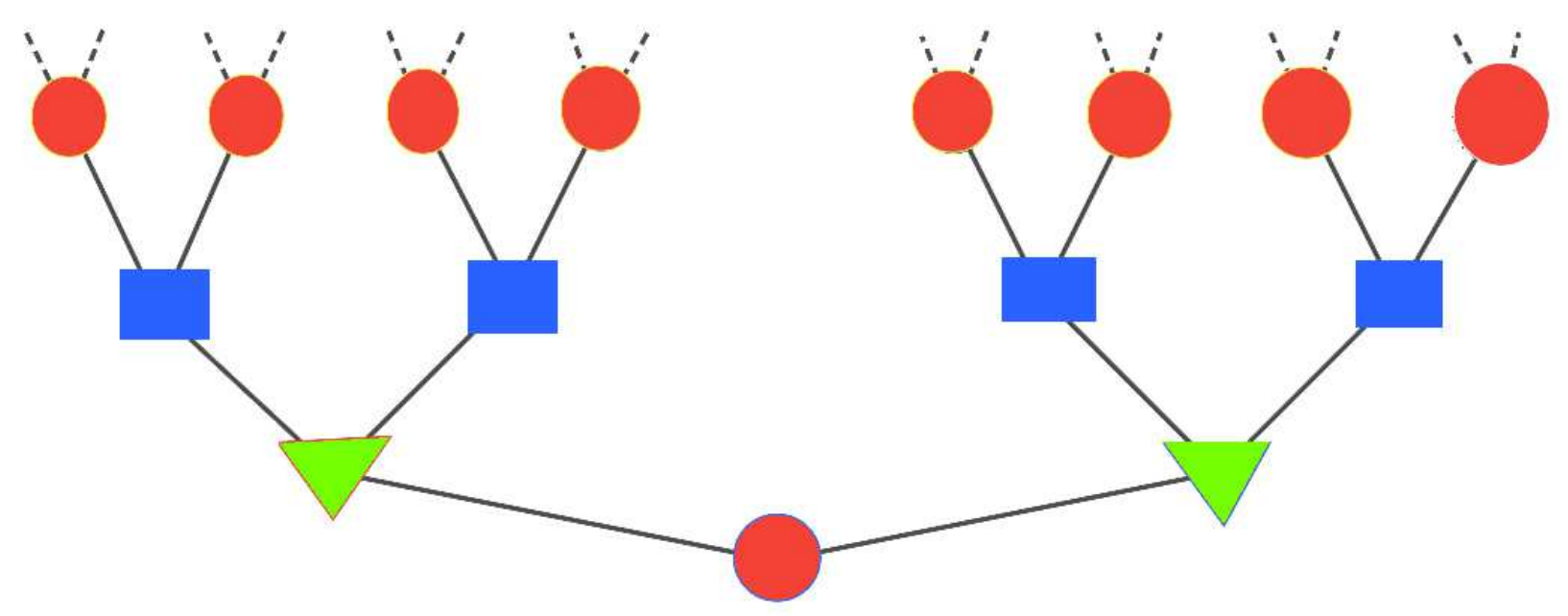}
\caption{Some generations of a second-order Cayley tree with three different spin values assigned to alternating levels.}\label{fig20}
\end{figure}

We define the Ising model with triple mixed spins $(\tfrac{1}{2}, 1, \tfrac{3}{2})$ governed by the Hamiltonian
\begin{equation} \label{eq1}
H(\xi) = -J \sum_{\langle x, y \rangle} \xi(x)\xi(y), \quad \xi \in \Xi.
\end{equation}
Let us denote $h = \{ h_x \}_{x \in \Gamma^k}$, where
\[
h_x =
\begin{cases}
    \tilde{h}_x, & x \in \Gamma_0^k, \\[2mm]
    \ddot{h}_x, & x \in \Gamma_1^k,\\[2mm]
    \widehat{h}_x, & x \in \Gamma_2^k,
\end{cases}
\]
with $\tilde{h}_x = (\tilde{h}_{-\frac{1}{2},x}, \tilde{h}_{\frac{1}{2},x}),$ $ \ddot{h}_x = (\ddot{h}_{-1,x}, \ddot{h}_{0,x}, \ddot{h}_{+1,x})$ and $ \widehat{h}(x) = (\widehat{h}_{-\frac{3}{2}}(x), \widehat{h}_{-\frac{1}{2}}(x), \widehat{h}_{\frac{1}{2}}(x), \widehat{h}_{\frac{3}{2}}(x))$.

For each $n \geq 1$, the finite-volume Gibbs measure $\mu_n^h$ is defined as
\begin{equation} \label{eq2}
\mu_n^h(\xi) = \frac{1}{Z_n} \exp\left\{ -\beta H_n(\xi) + \sum_{x \in W_n} h_{\xi(x)}(x) \right\}, \quad \xi \in \Xi_n := \Omega_{0,n} \times \Omega_{1,n}\times \Omega_{2,n},
\end{equation}
where $Z_n$ is the partition function, and $\beta>0$ is the inverse temperature.

The sequence of measures $\{ \mu_n^h \}$ is said to be compatible if, for all $n \geq 1$ and $\xi_{n-1} \in \Xi_{n-1}$, the following condition holds:
\begin{equation} \label{eq3}
\sum_{\omega \in \Xi^{W_n}} \mu_n^h(\xi_{n-1} \vee \omega) = \mu_{n-1}^h(\xi_{n-1}),
\end{equation}
where
\[
\Xi^{W_n} =
\begin{cases}
    \Psi^{W_n}, & \text{if } n \equiv 0 (\mathrm{mod}\,3), \\[2mm]
    \Phi^{W_n}, & \text{if } n \equiv 1 (\mathrm{mod}\,3), \\[2mm]
    \Upsilon^{W_n}, & \text{if } n \equiv 2 (\mathrm{mod}\,3).
\end{cases}
\]
Here, $\xi_{n-1} \vee \omega$ denotes the concatenation of the configurations. In this setting, there exists a unique probability measure $\mu$ on $\Xi$ such that for all $n$ and $\xi_n \in \Xi_n$,
\[
\mu\left( \{ \xi\mid_{V_n} = \xi_n \} \right) = \mu_n^h(\xi_n).
\]
Such a measure is called a \emph{splitting Gibbs measure} (SGM) associated with the model.

The following theorem is standard in the Cayley tree context, see, e.g., \cite{akin2018gibbs}:

\begin{theorem} \label{thm1}
The sequence of measures $\{\mu_n^h\},~n = 1, 2, \ldots$ defined by \eqref{eq2} is compatible if and only if, for all $x \in V$, the following system of equations holds:
\begin{equation}\label{67}
\left\{%
\begin{array}{llllll}
X_x=\prod\limits_{y \in S(x)}\frac{Y_y+\theta+\theta^2\,Z_y}{\theta^2\,Y_y+\theta+Z_y}, \\[4mm]
Y_x=\prod\limits_{y \in S(x)}\frac{\theta^6\,T_y+\theta^4+\theta^2\,U_y+N_y}{\theta^3(T_y+1+U_y+N_y)}, \\[4mm]
Z_x=\prod\limits_{y \in S(x)}\frac{T_y+\theta^2+\theta^4\,U_y+\theta^6\,N_y}{\theta^3(T_y+1+U_y+N_y)}, \\[4mm]
T_x=\prod\limits_{y \in S(x)}\frac{X_y+\theta^3}{\theta^2+\theta\,X_y}, \\[4mm]
U_x=\prod\limits_{y \in S(x)}\frac{\theta\,X_y+1}{X_y+\theta}, \\[4mm]
N_x=\prod\limits_{y \in S(x)}\frac{\theta^3\,X_y+1}{\theta^2+\theta\,X_y}.
\end{array}%
\right.
\end{equation}
where
\[\theta=\exp\Big(\frac{1}{2}\beta J\Big),\]
\[X_x=\exp(\tilde{h}_{\frac{1}{2},x}-\tilde{h}_{-\frac{1}{2},x}),~Y_x=\exp(\ddot{h}_{-1,x}-\ddot{h}_{0,x}),\]
\[Z_x=\exp(\ddot{h}_{1,x}-\ddot{h}_{0,x}),~T_x=\exp(\widehat{h}_{-\frac{3}{2},x}-\widehat{h}_{-\frac{1}{2},x}),\]
\[U_x=\exp(\widehat{h}_{\frac{1}{2},x}-\widehat{h}_{-\frac{1}{2},x}),~N_x=\exp(\widehat{h}_{\frac{3}{2},x}-\widehat{h}_{-\frac{1}{2},x}).\]

\end{theorem}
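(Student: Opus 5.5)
The plan is to verify the compatibility condition \eqref{eq3} directly, using that the Hamiltonian restricted to $V_n$ splits as $H_n(\xi)=H_{n-1}(\xi_{n-1})-J\sum_{x\in W_{n-1}}\sum_{y\in S(x)}\xi(x)\xi(y)$. Substituting \eqref{eq2} into the left side of \eqref{eq3} and summing over $\omega\in\Xi^{W_n}$, the sum factorizes over $x\in W_{n-1}$ and $y\in S(x)$, since each $\omega(y)$ interacts only with its parent. This gives
\[
\sum_{\omega}\mu_n^h(\xi_{n-1}\vee\omega)=\frac{1}{Z_n}\,e^{-\beta H_{n-1}(\xi_{n-1})}\prod_{x\in W_{n-1}}\prod_{y\in S(x)}\sum_{u}\exp\bigl(\beta J\,\xi(x)u+h_{u}(y)\bigr),
\]
where $u$ runs over the spin set of level $n$. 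Compatibility with $\mu_{n-1}^h$ is then equivalent to requiring, for every $x\in W_{n-1}$ and every value $\xi(x)$,
\[
\prod_{y\in S(x)}\sum_{u}\exp\bigl(\beta J\,\xi(x)u+h_u(y)\bigr)=a(x)\,\exp\bigl(h_{\xi(x)}(x)\bigr),
\]
with $a(x)>0$ independent of $\xi(x)$. Sufficiency is immediate: plugging this identity back in reproduces $\mu_{n-1}^h$ up to the normalizing constant $Z_n^{-1}\prod_x a(x)$, and that constant must equal $Z_{n-1}^{-1}$ because both sides are probability measures. Necessity follows by comparing two configurations in $\Xi_{n-1}$ that differ only at a single $x\in W_{n-1}$: their ratio must be the same under both measures, which forces the displayed identity.

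Next I eliminate $a(x)$ by taking ratios of the identity for different values of $\xi(x)$, normalized against a reference spin. I then rewrite everything with $\theta=e^{\beta J/2}$, so that $\exp(\beta J\,\xi(x)u)=\theta^{2\xi(x)u}$. There are three cases according to $n-1 \bmod 3$.

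\textbf{Parent in $\Gamma_0^k$, children in $\Phi$.} The ratio for $\xi(x)=\tfrac12$ versus $-\tfrac12$ is
\[
\frac{\theta^{-1}e^{h_{-1}}+e^{h_0}+\theta e^{h_1}}{\theta e^{h_{-1}}+e^{h_0}+\theta^{-1}e^{h_1}}.
\]
Dividing numerator and denominator by $e^{h_0}$ and multiplying both by $\theta$ gives the first equation of \eqref{67}.

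\textbf{Parent in $\Gamma_1^k$, children in $\Upsilon$.} Comparing $s=-1$ and $s=1$ against $s=0$, with $e^{h_{-1/2}}$ as the reference in the children, produces the factor $\theta^{-3}(\theta^6T+\theta^4+\theta^2U+N)/(T+1+U+N)$ and its mirror image. These are the equations for $Y_x$ and $Z_x$.

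\textbf{Parent in $\Gamma_2^k$, children in $\Psi$.} Comparing $\varphi\in\{-\tfrac32,\tfrac12,\tfrac32\}$ with $\varphi=-\tfrac12$, with $e^{h_{-1/2}}$ as the reference in the children, gives ratios of terms $\theta^{\pm1},\theta^{\pm3}$. After clearing common powers of $\theta$ these become the equations for $T_x$, $U_x$ and $N_x$.

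Conversely, if \eqref{67} holds, I define $a(x)$ from the reference spin; the ratio equations then recover the full identity for every value of $\xi(x)$, which gives sufficiency.

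The only real obstacle is bookkeeping. I need to track the exponents $2\xi(x)u$ correctly for each level pair and normalize consistently, so that the powers of $\theta$ come out exactly as stated; for example, in the $T_x$ equation both numerator and denominator are multiplied by $\theta^{3}$ after normalizing. I would check each of the six formulas by this direct substitution.
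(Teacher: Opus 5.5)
Your argument is correct and is the standard one (sum out the last generation, factor over $y\in S(x)$, and force the $\xi(x)$-dependence to be proportional to $e^{h_{\xi(x)}(x)}$). The paper does not prove this theorem itself but cites it as standard via \cite{akin2018gibbs}, and your three level cases reproduce all six equations of \eqref{67}; one bookkeeping correction: on $\Upsilon$--$\Psi$ edges the exponents $2\varphi u$ are $\pm\tfrac12,\pm\tfrac32$ (not $\pm1,\pm3$), so the $T_x$ and $N_x$ ratios are cleared by multiplying by $\theta^{3/2}$ and the $U_x$ ratio by $\theta^{1/2}$, not by $\theta^{3}$.
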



\section{Total positivity of order $2$ kernels, attractiveness of Gibbs specifications}

We recall that two-site Boltzmann factor along an edge $\langle x,y\rangle$ is defined by
\begin{equation}\label{eq:kernel}
K(s,t) := \exp\bigl(\beta J\, s t\bigr),\qquad s\in S_x,\ t\in S_y,
\end{equation}
where $S_x$ denotes the (finite, totally ordered) spin-set at $x$.
It is convenient to write
If $J>0$, then  $\theta>1$.

\begin{definition}\label{def:tp2}
Let $S,T$ be totally ordered finite sets.
A nonnegative kernel $K:S\times T\to (0,\infty)$ is called \emph{total positivity of order 2 (TP2)} if for all
$s_1<s_2$ in $S$ and $t_1<t_2$ in $T$ one has
\begin{equation}\label{eq:tp2}
K(s_1,t_1)\,K(s_2,t_2)\ \ge\ K(s_1,t_2)\,K(s_2,t_1).
\end{equation}
Equivalently: every $2\times 2$ minor of the matrix $(K(s,t))_{s\in S,t\in T}$ is nonnegative.
\end{definition}

\begin{lem}\label{lem:Ktp2}
Fix totally ordered finite sets $S,T\subset\mathbb{R}$ and consider the kernel $K(s,t)$
as in \eqref{eq:kernel}.
Then $K$ is TP2 if and only if $J\ge 0$.
Moreover, if $J>0$, then the inequality in \eqref{eq:tp2} is strict whenever
$s_1<s_2$ and $t_1<t_2$.
\end{lem}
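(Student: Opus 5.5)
The plan is to compute the $2\times2$ minor in \eqref{eq:tp2} directly. The exponential form of $K$ turns the TP2 inequality into a sign condition on $J$ multiplied by a positive quantity. Fix $s_1<s_2$ in $S$ and $t_1<t_2$ in $T$. Since $K>0$, inequality \eqref{eq:tp2} is equivalent to the ratio
\[
\frac{K(s_1,t_1)\,K(s_2,t_2)}{K(s_1,t_2)\,K(s_2,t_1)}
=\exp\bigl(\beta J\,(s_1t_1+s_2t_2-s_1t_2-s_2t_1)\bigr)
\]
being $\ge 1$. The exponent factors as
\[
s_1t_1+s_2t_2-s_1t_2-s_2t_1=(s_2-s_1)(t_2-t_1),
\]
so the ratio equals $\exp\bigl(\beta J (s_2-s_1)(t_2-t_1)\bigr)$. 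Since $\beta>0$ and $(s_2-s_1)(t_2-t_1)>0$, the exponent has the sign of $J$.

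The \emph{if} direction follows at once. If $J\ge0$, the exponent is $\ge0$ and the ratio is $\ge1$ for every admissible choice, so $K$ is TP2. If $J>0$, the exponent is strictly positive, the ratio is strictly greater than $1$, and the inequality is strict for all $s_1<s_2$, $t_1<t_2$. This proves the ``moreover'' clause.

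For the \emph{only if} direction, suppose $J<0$. The exponent is then strictly negative for any $s_1<s_2$ and $t_1<t_2$, so the minor is strictly negative and \eqref{eq:tp2} fails, which means $K$ is not TP2. The one point needing care is that such pairs must exist, i.e.\ $|S|,|T|\ge2$. This holds for all the spin sets used here: $\Psi$, $\Phi$ and $\Upsilon$ each have at least two elements. If one of the sets were a singleton, the TP2 condition would be vacuous and the equivalence would have to be stated under the assumption $|S|,|T|\ge2$; I would add this as an implicit hypothesis. Apart from this, there is no real obstacle, since the whole proof is the factorization identity.
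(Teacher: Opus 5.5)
Your proposal is correct and follows the paper's proof essentially verbatim: the paper also computes the cross-ratio, factors the exponent as $(s_2-s_1)(t_2-t_1)$, and reads off the sign of $J$. Your remark that the ``only if'' direction needs $|S|,|T|\ge 2$ (and your explicit use of $\beta>0$) makes precise two hypotheses the paper leaves implicit. Both hold for the spin sets $\Psi,\Phi,\Upsilon$ used here.
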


\begin{proof}
Let $s_1<s_2$ and $t_1<t_2$.
Compute the cross-ratio:
\begin{eqnarray*}
\frac{K(s_1,t_1)\,K(s_2,t_2)}{K(s_1,t_2)\,K(s_2,t_1)}
&=&
\exp\!\Bigl(\beta J\bigl[s_1t_1+s_2t_2-s_1t_2-s_2t_1\bigr]\Bigr)\\[2mm]
&=&
\exp\!\Bigl(\beta J (s_2-s_1)(t_2-t_1)\Bigr).
\end{eqnarray*}
Since $(s_2-s_1)(t_2-t_1)>0$, the above ratio is $\ge 1$ if and only if $J\ge 0$.
This is exactly \eqref{eq:tp2}; strictness holds when $J>0$.
\end{proof}

\begin{remark}
We point out that TP2 is equivalent to \emph{log-supermodularity}:
\[
\log K(s_1,t_1)+\log K(s_2,t_2)\ \ge\ \log K(s_1,t_2)+\log K(s_2,t_1),
\]
which is often the form used in FKG/Holley arguments.
\end{remark}

Now, we use the coordinate-wise partial order on configurations.
For a finite vertex set $\Lambda\subset \Gamma_k$, write $\Omega_\Lambda:=\prod_{x\in\Lambda} S_x$,
and for $\sigma,\tau\in\Omega_\Lambda$ define
\[
\sigma\le\tau \quad\Longleftrightarrow\quad \sigma(x)\le \tau(x)\ \text{for all }x\in\Lambda,
\]
and pointwise joins/meets $$(\sigma\vee\tau)(x)=\max\{\sigma(x),\tau(x)\}, \quad
(\sigma\wedge\tau)(x)=\min\{\sigma(x),\tau(x)\}.$$

We recall that a Gibbs specification $\gamma_{\Lambda}(\cdot \mid \eta)$ is \emph{attractive}
if for every finite $\Lambda$ and every pair of boundary conditions
$\eta\le \xi$ on $\Lambda^c$, one has stochastic domination:
for every bounded \emph{increasing} function $F:\Omega_\Lambda\to\mathbb{R}$,
\begin{equation}\label{eq:attr}
\mathbb{E}_{\gamma_\Lambda(\cdot \mid\eta)}[F]\ \le\ \mathbb{E}_{\gamma_\Lambda(\cdot\mid\xi)}[F].
\end{equation}

\begin{theorem}\label{thm:tp2-attractive}
Consider a finite graph $G=(V,E)$ with totally ordered finite spin sets $\{S_x\}_{x\in V}$.
Assume the (unnormalized) Gibbs weight has the pair-product form
\begin{equation}\label{eq:weight}
w(\sigma)\ :=\ \prod_{\{x,y\}\in E} K_{xy}(\sigma(x),\sigma(y))\ \cdot\ \prod_{x\in V} a_x(\sigma(x)),
\end{equation}
where $a_x>0$ and each edge kernel $K_{xy}:S_x\times S_y\to (0,\infty)$ is TP2.
Then the associated Gibbs specification is attractive.
In particular, for the Ising-type kernel $K_{xy}(s,t)=\exp(\beta J st)$,
the specification is attractive whenever $J>0$ (equivalently $\theta>1$).
\end{theorem}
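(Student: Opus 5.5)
The plan is to use the Holley criterion, or equivalently the FKG lattice condition, for the finite-volume conditional measures. Fix a finite $\Lambda\subset V$ and boundary conditions $\eta\le\xi$ on $\Lambda^c$. The conditional weight on $\Omega_\Lambda$ is
\[
w_\eta(\sigma)=\prod_{\{x,y\}\subset\Lambda}K_{xy}(\sigma(x),\sigma(y))\prod_{x\in\Lambda}a_x(\sigma(x))\prod_{x\in\Lambda,\,y\notin\Lambda,\,x\sim y}K_{xy}(\sigma(x),\eta(y)),
\]
and $w_\xi$ is defined in the same way. Holley's theorem on the distributive lattice $\Omega_\Lambda$, a finite product of chains, gives $\gamma_\Lambda(\cdot\mid\eta)\preceq\gamma_\Lambda(\cdot\mid\xi)$ provided that
\[
w_\xi(\sigma\vee\tau)\,w_\eta(\sigma\wedge\tau)\ \ge\ w_\xi(\sigma)\,w_\eta(\tau)\qquad\text{for all }\sigma,\tau\in\Omega_\Lambda .
\]
This stochastic domination is exactly \eqref{eq:attr} for every bounded increasing $F$.

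Because every factor is positive and the weights are products, the Holley inequality can be checked factor by factor.

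\emph{Single-site factors.} We have $\{\sigma(x)\vee\tau(x),\sigma(x)\wedge\tau(x)\}=\{\sigma(x),\tau(x)\}$ as multisets, so these factors cancel exactly.

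\emph{Internal edges.} For an edge $\{x,y\}\subset\Lambda$ we need
\[
K(\sigma_x\vee\tau_x,\sigma_y\vee\tau_y)\,K(\sigma_x\wedge\tau_x,\sigma_y\wedge\tau_y)\ \ge\ K(\sigma_x,\sigma_y)\,K(\tau_x,\tau_y).
\]
This follows from a short case analysis. If $\sigma_x\le\tau_x$ and $\sigma_y\le\tau_y$, or if both reverse inequalities hold, the two sides are equal. In the mixed case, say $\sigma_x<\tau_x$ and $\sigma_y>\tau_y$, the left side is $K(\tau_x,\sigma_y)K(\sigma_x,\tau_y)$. The inequality then becomes \eqref{eq:tp2} with $s_1=\sigma_x$, $s_2=\tau_x$, $t_1=\tau_y$, $t_2=\sigma_y$. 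This is the standard equivalence between TP2 on chains and log-supermodularity on the product lattice, as noted in the Remark.

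\emph{Boundary edges.} For $x\in\Lambda$ and $y\notin\Lambda$ we need
\[
K(\sigma_x\vee\tau_x,\xi_y)\,K(\sigma_x\wedge\tau_x,\eta_y)\ \ge\ K(\sigma_x,\xi_y)\,K(\tau_x,\eta_y).
\]
If $\sigma_x\ge\tau_x$ this is an equality. If $\sigma_x<\tau_x$, it reads $K(\tau_x,\xi_y)K(\sigma_x,\eta_y)\ge K(\sigma_x,\xi_y)K(\tau_x,\eta_y)$. When $\eta_y<\xi_y$ this is TP2 with $s_1=\sigma_x<s_2=\tau_x$ and $t_1=\eta_y<t_2=\xi_y$; when $\eta_y=\xi_y$ it is an equality.

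Multiplying all the factor inequalities gives the Holley condition, and Holley's theorem then yields \eqref{eq:attr}.

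For the final assertion, Lemma~\ref{lem:Ktp2} shows that $K(s,t)=\exp(\beta Jst)$ is TP2 on any pair of real chains when $J>0$. In our model the chains are $\Psi,\Phi,\Upsilon$ with their natural orders. By \eqref{eq1} and \eqref{eq2} the Gibbs weight has the form \eqref{eq:weight}, with $a_x\equiv1$ except for boundary fields, which are single-site factors and are also harmless. So the specification is attractive, and $J>0$ is equivalent to $\theta>1$.

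The main obstacle is justifying the use of Holley's theorem, which requires strict positivity of the weights and a distributive lattice structure. Both hold here: all kernels are positive, and a product of finite chains is distributive. I would cite Holley's inequality as a known result, together with the fact that the single-edge Holley condition is equivalent to TP2. The case bookkeeping in the factorization step is the only place where care is needed.
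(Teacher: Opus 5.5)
Your proposal is correct and follows essentially the same route as the paper: both verify Holley's criterion by checking the TP2 inequality edge by edge, with the single-site factors cancelling. The only difference is organizational. The paper first proves the global lattice condition $w(\sigma\wedge\tau)\,w(\sigma\vee\tau)\ge w(\sigma)\,w(\tau)$ and then builds in the boundary via $\tilde\sigma=\sigma_\Lambda\eta_{\Lambda^c}$, $\tilde\tau=\tau_\Lambda\xi_{\Lambda^c}$, whereas you treat the boundary edges directly, which amounts to the same computation.
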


\begin{proof}
For each $x\in V$, the multiset identity $\{(\sigma\wedge\tau)(x),(\sigma\vee\tau)(x)\}=\{\sigma(x),\tau(x)\}$ implies
\[
a_x\big((\sigma\wedge\tau)(x)\big)\,a_x\big((\sigma\vee\tau)(x)\big)=a_x\big(\sigma(x)\big)\,a_x\big(\tau(x)\big).
\]
Thus it suffices to prove the lattice condition edge-by-edge.

Fix an edge $\{x,y\}\in E$ and set
\[
s_1:=(\sigma\wedge \tau)(x),\quad s_2:=(\sigma\vee\tau)(x),\quad
t_1:=(\sigma\wedge \tau)(y),\quad t_2:=(\sigma\vee\tau)(y).
\]
Then $s_1\le s_2$ in $S_x$ and $t_1\le t_2$ in $S_y$, and
\[
K_{xy}\big((\sigma\wedge\tau)(x),(\sigma\wedge\tau)(y)\big)\,
K_{xy}\big((\sigma\vee\tau)(x),(\sigma\vee\tau)(y)\big)
=K_{xy}(s_1,t_1)\,K_{xy}(s_2,t_2).
\]
If $\sigma(x)\le\tau(x)$ and $\sigma(y)\le\tau(y)$ (or $\sigma(x)\ge\tau(x)$ and $\sigma(y)\ge\tau(y)$), then
$$(\sigma\wedge\tau)(x)=\sigma(x), \quad (\sigma\wedge\tau)(y)=\sigma(y)$$ and
$$(\sigma\vee\tau)(x)=\tau(x),\quad (\sigma\vee\tau)(y)=\tau(y),$$ hence
\[
K_{xy}(s_1,t_1)\,K_{xy}(s_2,t_2)=K_{xy}(\sigma(x),\sigma(y))\,K_{xy}(\tau(x),\tau(y)).
\]
Otherwise the local orderings at $x$ and $y$ are opposite, and one checks that
\[
K_{xy}(\sigma(x),\sigma(y))\,K_{xy}(\tau(x),\tau(y))=K_{xy}(s_1,t_2)\,K_{xy}(s_2,t_1).
\]
By the TP2 property of $K_{xy}$,
\[
K_{xy}(s_1,t_1)\,K_{xy}(s_2,t_2)\ \ge\ K_{xy}(s_1,t_2)\,K_{xy}(s_2,t_1),
\]
so in all cases
\[
K_{xy}\big((\sigma\wedge\tau)(x),(\sigma\wedge\tau)(y)\big)\,
K_{xy}\big((\sigma\vee\tau)(x),(\sigma\vee\tau)(y)\big)
\ \ge\
K_{xy}(\sigma(x),\sigma(y))\,K_{xy}(\tau(x),\tau(y)).
\]
Multiplying over all edges and using the single-site identity gives
\[
w(\sigma\wedge\tau)\,w(\sigma\vee\tau)\ \ge\ w(\sigma)\,w(\tau),\qquad \forall \sigma,\tau\in\Omega_V.
\]

Now fix $\Lambda\Subset V$ and boundary conditions $\eta\le\xi$ on $\Lambda^c$. Define
$$w_\Lambda^\eta(\sigma_\Lambda):=w(\sigma_\Lambda\eta_{\Lambda^c})\quad w_\Lambda^\xi(\sigma_\Lambda):=w(\sigma_\Lambda\xi_{\Lambda^c}).$$
For $\sigma_\Lambda,\tau_\Lambda\in\Omega_\Lambda$, set
$\tilde\sigma:=\sigma_\Lambda\eta_{\Lambda^c}$ and $\tilde\tau:=\tau_\Lambda\xi_{\Lambda^c}$.
Then $\tilde\sigma\wedge\tilde\tau=(\sigma_\Lambda\wedge\tau_\Lambda)\eta_{\Lambda^c}$ and
$\tilde\sigma\vee\tilde\tau=(\sigma_\Lambda\vee\tau_\Lambda)\xi_{\Lambda^c}$, hence the lattice condition yields
\[
w_\Lambda^\xi(\sigma_\Lambda\vee\tau_\Lambda)\,w_\Lambda^\eta(\sigma_\Lambda\wedge\tau_\Lambda)
\ \ge\
w_\Lambda^\eta(\sigma_\Lambda)\,w_\Lambda^\xi(\tau_\Lambda),
\]
which is Holley's criterion. Therefore, by Holley's theorem \cite{G},
$\gamma_\Lambda(\cdot\mid \eta)\le_{\mathrm{st}}\gamma_\Lambda(\cdot\mid \xi)$, i.e.\ the specification is attractive.
\end{proof}

%
%

On a tree, Gibbs measures (and splitting Gibbs measures) can be expressed through
\emph{boundary laws}.
Let $m$ denote a strictly positive message on a child subtree, i.e.\ a positive
vector $m(\cdot)$ on the child's spin-set. The parent marginal is obtained by
a kernel transform
\begin{equation}\label{eq:kernel-transform}
(\mathcal{K}m)(s)\ :=\ \sum_{t} K(s,t)\,m(t),
\end{equation}
and products over independent subtrees.

A convenient order on positive vectors over totally ordered spin sets is the
\emph{monotone likelihood ratio} (MLR) order: $m\preceq_{\mathrm{MLR}} m'$
if $m'(t)/m(t)$ is increasing in $t$.

\begin{proposition}\label{prop:mlr}
Let $K$ be TP2 on $S\times T$. Then the transform $m\mapsto \mathcal{K}m$
preserves the MLR order:
\[
m\preceq_{\mathrm{MLR}} m'\quad\Longrightarrow\quad
\mathcal{K}m\preceq_{\mathrm{MLR}} \mathcal{K}m'.
\]
Consequently, on a tree with ferromagnetic kernel $K(s,t)=e^{\beta J st}$ ($J>0$),
the parent message (and hence any likelihood ratio at the parent) is monotone
in each child message/boundary condition.
\end{proposition}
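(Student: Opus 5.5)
The plan is a direct verification. The MLR order is characterized by cross-ratios. Then the TP2 inequality for $K$ and the MLR inequality for $(m,m')$ are combined through a symmetrization of a double sum. This is the classical composition argument: the product of two TP2 kernels is TP2.

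\emph{Reformulation.} Take strictly positive vectors $m\preceq_{\mathrm{MLR}} m'$ on $T$. The goal is that $(\mathcal{K}m')(s)/(\mathcal{K}m)(s)$ is nondecreasing in $s$. Equivalently, for $s_1<s_2$ in $S$ we need
\[
D:=(\mathcal{K}m')(s_2)\,(\mathcal{K}m)(s_1)-(\mathcal{K}m')(s_1)\,(\mathcal{K}m)(s_2)\ \ge 0 .
\]
Expanding the sums gives
\[
D=\sum_{t,u\in T} m'(t)\,m(u)\,\bigl[K(s_2,t)K(s_1,u)-K(s_1,t)K(s_2,u)\bigr].
\]

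\emph{Symmetrization.} The diagonal terms $t=u$ vanish. Next, pair each term indexed by $(t,u)$ with $t>u$ with the term indexed by $(u,t)$. The bracket for $(u,t)$ is the negative of the bracket for $(t,u)$, so
\[
D=\sum_{t>u}\bigl[m'(t)m(u)-m'(u)m(t)\bigr]\,\bigl[K(s_2,t)K(s_1,u)-K(s_1,t)K(s_2,u)\bigr].
\]
Both factors are nonnegative when $u<t$ and $s_1<s_2$:
\begin{itemize}
\item The first factor is $\ge0$ because $m'/m$ is nondecreasing, so $m'(t)/m(t)\ge m'(u)/m(u)$, and $m>0$.
\item The second factor is $\ge0$ by exactly the TP2 inequality \eqref{eq:tp2}, applied with $s_1<s_2$ and $u<t$.
\end{itemize}
Hence $D\ge0$, which proves the first claim.

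\emph{Tree consequence.} Use Lemma~\ref{lem:Ktp2}: for $J>0$ the kernel $e^{\beta Jst}$ is TP2 between any pair of the ordered spin sets $\Psi,\Phi,\Upsilon$. The parent message is $\prod_{y\in S(x)}(\mathcal{K}_{xy}m_y)(s)$. The MLR order is preserved under coordinatewise products of positive vectors, because a product of nonnegative nondecreasing ratios is nondecreasing. So raising a single child message in the MLR order raises the parent message in the MLR order. Induction along the finite tree, starting from the boundary laws, then gives monotonicity in boundary conditions.

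\emph{Main obstacle.} The only delicate point is the symmetrization bookkeeping: getting the sign of the paired bracket right and taking the index pairing over $t>u$, not over $t<u$. Everything else is routine.
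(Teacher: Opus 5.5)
Your proof is correct and follows the paper's argument essentially step for step. Both expand the cross-multiplied difference as a double sum and antisymmetrize over pairs $t>u$, taking the sign of one factor from TP2 and of the other from the monotone ratio $m'/m$. The tree consequence is handled the same way, by noting that the MLR order is preserved when positive factors are multiplied over the children.
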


\begin{proof}
Let $m,m'>0$ be vectors on $T$ and write $r(t):=m'(t)/m(t)$. The assumption
$m\preceq_{\mathrm{MLR}} m'$ means that $r(t)$ is increasing in $t\in T$.
We must show that the ratio
\[
R(s):=\frac{(Km')(s)}{(Km)(s)}=\frac{\sum_{t\in T}K(s,t)m'(t)}{\sum_{t\in T}K(s,t)m(t)}
\]
is increasing in $s\in S$, i.e. $Km\preceq_{\mathrm{MLR}} Km'$.

Fix $s_1<s_2$ in $S$. Since all terms are strictly positive, it is equivalent to prove
\[
(Km')(s_2)\,(Km)(s_1)\;-\;(Km')(s_1)\,(Km)(s_2)\ \ge\ 0.
\]
Expand and rearrange as a double sum:
\begin{align*}
& (Km')(s_2)\,(Km)(s_1)-(Km')(s_1)\,(Km)(s_2) \\
&\qquad=\sum_{t,u\in T}\Bigl[K(s_2,t)K(s_1,u)-K(s_1,t)K(s_2,u)\Bigr]\,m'(t)\,m(u).
\end{align*}
Define $\Delta(t,u):=K(s_2,t)K(s_1,u)-K(s_1,t)K(s_2,u)$.
Note that $\Delta(u,t)=-\Delta(t,u)$. Hence antisymmetrizing gives
\begin{align*}
& (Km')(s_2)\,(Km)(s_1)-(Km')(s_1)\,(Km)(s_2) \\
&\qquad=\frac12\sum_{t,u\in T}\Delta(t,u)\,\bigl(m'(t)m(u)-m'(u)m(t)\bigr)
=\sum_{t>u}\Delta(t,u)\,\bigl(m'(t)m(u)-m'(u)m(t)\bigr).
\end{align*}
For $t>u$, the TP2 property of $K$ (applied with $(s_1,s_2)$ and $(u,t)$) yields
\[
K(s_1,u)\,K(s_2,t)\ \ge\ K(s_1,t)\,K(s_2,u),
\]
i.e. $\Delta(t,u)\ge 0$. On the other hand,
\[
m'(t)m(u)-m'(u)m(t)=m(t)m(u)\,\bigl(r(t)-r(u)\bigr)\ \ge\ 0
\qquad (t>u),
\]
because $r$ is increasing. Therefore every term in the sum over $t>u$ is
nonnegative, and the whole sum is $\ge 0$. This shows $R(s_2)\ge R(s_1)$, hence
$R$ is increasing and $Km\preceq_{\mathrm{MLR}} Km'$.

For the final claim on a tree, the parent message is (up to normalization) a product
of transforms from the children. If a single child message is replaced by a larger
one in MLR order, Proposition~6 shows the corresponding factor $Km$ increases in
MLR, and multiplication by the remaining fixed positive factors preserves the MLR
order (the likelihood ratio is multiplied by an $s$-independent factor). Thus the
parent message/likelihood ratio is monotone in each child message and hence in the
boundary condition in the ferromagnetic case $J>0$.
\end{proof}


\section{Translation-invariant splitting Gibbs measures (TISGMs)}\label{sec2}

In this section, we investigate the existence of translation-invariant splitting Gibbs measures (TISGMs) associated with the Ising model involving triple mixed spins $\frac{1}{2}$, $1$ and $\frac{3}{2}$  by analyzing the system of equations \eqref{67}.

In the sequel, we assume that \( X = X_x \), \( Y = Y_x \), \( Z = Z_x \), \( T = T_x \), \( U = U_x \) and \( N = N_x \) for all \( x \in \Gamma^k \).
A splitting Gibbs measure is said to be \emph{translation-invariant} if it corresponds to such constant solutions. Then the system of equations \eqref{67} transforms into the following system
\begin{equation}\label{68}
\left\{%
\begin{array}{llllll}
X=\left(\frac{Y+\theta+\theta^2\,Z}{\theta^2\,Y+\theta+Z}\right)^k, \\[4mm]
Y=\Big(\frac{\theta^6\,T+\theta^4+\theta^2\,U+N}{\theta^3(T+1+U+N)}\Big)^k, \\[4mm]
Z=\Big(\frac{T+\theta^2+\theta^4\,U+\theta^6\,N}{\theta^3(T+1+U+N)}\Big)^k, \\[4mm]
T=\Big(\frac{X+\theta^3}{\theta^2+\theta\,X}\Big)^k, \\[4mm]
U=\Big(\frac{\theta\,X+1}{X+\theta}\Big)^k, \\[4mm]
N=\Big(\frac{\theta^3\,X+1}{\theta^2+\theta\,X}\Big)^k.
\end{array}%
\right.
\end{equation}

For convenience, we introduce the redefinitions:
\[x=\sqrt[k]{X},\quad y=\sqrt[k]{Y},\quad z=\sqrt[k]{Z},\quad t=\sqrt[k]{T},\quad u=\sqrt[k]{U},\quad v=\sqrt[k]{N}.\]
Thus, the functional equation \eqref{68} transforms into:
\begin{equation}\label{69}
\left\{%
\begin{array}{llllll}
x=\frac{y^k+\theta+\theta^2\,z^k}{\theta^2\,y^k+\theta+z^k}, \\[4mm]
y=\frac{\theta^6\,t^k+\theta^4+\theta^2\,u^k+v^k}{\theta^3(t^k+1+u^k+v^k)}, \\[4mm]
z=\frac{t^k+\theta^2+\theta^4\,u^k+\theta^6\,v^k}{\theta^3(t^k+1+u^k+v^k)}, \\[4mm]
t=\frac{x^k+\theta^3}{\theta^2+\theta\,x^k}, \\[4mm]
u=\frac{\theta\,x^k+1}{x^k+\theta}, \\[4mm]
v=\frac{\theta^3\,x^k+1}{\theta^2+\theta\,x^k}.
\end{array}%
\right.
\end{equation}
By substituting equations  into the first equation of \eqref{69}, the system reduces to a single fixed-point equation of the form:
\begin{equation}\label{eq13}
x = f(x,\theta,k),
\end{equation}
where
\begin{equation}\label{eq14}
f(x,\theta,k):= \frac{p(x,\theta,k)+\theta+\theta^2\,q(x,\theta,k)}{\theta^2\,p(x,\theta,k)+\theta+q(x,\theta,k)},
\end{equation}
\begin{equation}\label{eq141}
p(x,\theta,k):=\left(\frac{\theta^6\left(\frac{x^k+\theta^3}{\theta^2+\theta\,x^k}\right)^k+\theta^4+\theta^2\left(\frac{\theta\,x^k+1}{x^k+\theta}\right)^k+\left(\frac{\theta^3x^k+1}{\theta^2+\theta x^k}\right)^k}{\theta^3\left(\frac{x^k+\theta^3}{\theta^2+\theta\, x^k}\right)^k+\theta^3+\theta^3\left(\frac{\theta\,x^k+1}{x^k+\theta}\right)^k+\theta^3\left(\frac{\theta^3\,x^k+1}{\theta^2+\theta\, x^k}\right)^k}\right)^k,
\end{equation}
\begin{equation}\label{eq142}
q(x,\theta,k):=\left(\frac{\left(\frac{x^k+\theta^3}{\theta^2+\theta x^k}\right)^k+\theta^2+\theta^4\left(\frac{\theta\, x^k+1}{x^k+\theta}\right)^k+\theta^6\left(\frac{\theta^3\,x^k+1}{\theta^2+\theta\, x^k}\right)^k}{\theta^3\left(\frac{x^k+\theta^3}{\theta^2+\theta\,x^k}\right)^k+\theta^3+\theta^3\left(\frac{\theta\, x^k+1}{x^k+\theta}\right)^k+\theta^3\left(\frac{\theta^3\,x^k+1}{\theta^2+\theta\,x^k}\right)^k}\right)^k.
\end{equation}

Fix $k\ge1$ and $\theta>0$, and for the sake of simplicity, we write
\[
   F(x) := f(x,\theta,k), \qquad x>0.
\]

\begin{lem}\label{lem:basic-F}
For every $k\ge1$ and $\theta>1$ the following hold:
\begin{enumerate}
\item[(i)] The map $F:(0,\infty)\to(0,\infty)$ is real-analytic and strictly
      increasing.
\item[(ii)] The limits
      \[
        L_0(\theta,k) := \lim_{x\downarrow0} F(x), \qquad
        L_\infty(\theta,k) := \lim_{x\uparrow\infty} F(x)
      \]
      exist and satisfy $0<L_0(\theta,k)<\infty$ and
      $0<L_\infty(\theta,k)<\infty$.
\item[(iii)]  One has
      \[
        F(1)=1,
      \]
      so $x=1$ is a fixed point of $F$ for all $(\theta,k)$.
\end{enumerate}
In particular, the image of $F$ is contained in a compact interval
\[
   I(\theta,k) := \bigl[ \min\{L_0(\theta,k),L_\infty(\theta,k)\},
                    \max\{L_0(\theta,k),L_\infty(\theta,k)\} \bigr]
   \subset (0,\infty).
\]
\end{lem}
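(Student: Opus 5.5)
The plan is to write $F$ as a composition of elementary one-variable Möbius-type maps and to read off all three properties from that decomposition. Put $w=x^k$, so $w$ is a strictly increasing analytic bijection of $(0,\infty)$. Then $t,u,v$ in \eqref{69} are linear-fractional functions of $w$ with positive coefficients. Each of them is strictly increasing when $\theta>1$, because the determinants are
\[
\theta-\theta^4<0 \ (\text{for } t),\qquad \theta^2-1>0 \ (\text{for } u),\qquad \theta^4-\theta^2>0 \ (\text{for } v).
\]
The first one looks like a sign problem. It is resolved by rewriting $t=(w+\theta^3)/(\theta(\theta+w))$: the sign of $t'$ is that of $\theta-\theta^3<0$, so $t$ is in fact decreasing.

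This mixed monotonicity means a coordinatewise argument will not work. Instead I would use the MLR framework of Proposition~\ref{prop:mlr}. Interpret the vector $(T,1,U,N)$, indexed by the $\Upsilon$-spins $-\tfrac32<-\tfrac12<\tfrac12<\tfrac32$, as the message from a $\Upsilon$-site. Interpret $(Y,1,Z)$ on $\Phi$ and $(1,X)$ on $\Psi$ similarly. The relations \eqref{69} are exactly the kernel transforms \eqref{eq:kernel-transform} with $K(s,t)=\theta^{2st}$, followed by $k$-th powers, i.e.\ products over identical subtrees.

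\textbf{Step 1: monotonicity of each transform.} Fix $k$ and $\theta>1$. If $X$ increases, then the vector $(1,X)$ increases in MLR order. By Proposition~\ref{prop:mlr} the induced $\Upsilon$-message increases in MLR, and taking $k$-th powers preserves this. Feeding this into the $\Upsilon\to\Phi$ transform, the $\Phi$-message also increases in MLR. The final $\Phi\to\Psi$ transform then yields a ratio $F$ that is nondecreasing in $x$.

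\textbf{Step 2: strictness.} I would use Lemma~\ref{lem:Ktp2}: for $J>0$ the TP2 inequality is strict. In the proof of Proposition~\ref{prop:mlr}, every term with $t>u$ is then nonnegative, and at least one is positive as soon as the likelihood ratio $r$ is nonconstant. So a strict MLR increase of the input produces a strict MLR increase of the output. Strictness therefore propagates through all three transforms, giving $F$ strictly increasing on $(0,\infty)$.

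\textbf{Step 3: analyticity and positivity.} Every denominator in \eqref{eq141}, \eqref{eq142} and \eqref{eq14} is a sum of positive terms for $x>0$. Hence $F$ is a composition of rational functions without poles on $(0,\infty)$, so it is real-analytic and takes positive values.

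\textbf{Step 4: the limits (ii).} As $x\downarrow0$ we have $w\to0$, and $t,u,v$ converge to $\theta,\theta^{-1},\theta^{-2}$. As $x\to\infty$ they converge to $\theta^{-1},\theta,\theta^{2}$. These limits are finite and positive, so $p,q$ and hence $F$ have finite positive limits by continuity of the rational expressions on $[0,\infty]$. Monotonicity then places the image inside $I(\theta,k)$.

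\textbf{Step 5: the fixed point (iii).} At $x=1$ we get $t=\theta^{-1}\cdot(1+\theta^3)/(1+\theta)$, $u=1$, and $v=(\theta^3+1)/(\theta(\theta+1))=t$. So $t^k=v^k$ and $u^k=1$. The numerators of $p$ and $q$ are then
\[
\theta^6t^k+\theta^4+\theta^2+t^k \quad\text{and}\quad t^k+\theta^2+\theta^4+\theta^6t^k,
\]
which are equal, so $p=q$. With $p=q$, the numerator and denominator of \eqref{eq14} are both $(1+\theta^2)p+\theta$, giving $F(1)=1$. This can also be seen from the spin-flip symmetry of the model.

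The main obstacle is Step 2, namely making strictness genuinely propagate. One must check that the output message is not MLR-constant, i.e.\ that its likelihood ratio actually varies. This holds because each transform is injective on projective classes, since $K$ is nonsingular for $\theta>1$. If the MLR route gets messy, the fallback is direct differentiation: view $F$ as a Möbius map in the ratio variables, whose derivative has a sign given by a $2\times2$ determinant.
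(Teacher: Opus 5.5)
Your proposal is correct and follows essentially the same route as the paper: analyticity comes from the positive denominators, monotonicity from strict TP2 of $K(s,t)=\theta^{2st}$ together with MLR preservation (Proposition~\ref{prop:mlr}) applied through the $\Psi\to\Upsilon\to\Phi\to\Psi$ transforms, the limits in (ii) from those of $t,u,v$, and $F(1)=1$ by direct substitution---and you are more explicit than the paper about the three-level composition and about strictness. One correction: the injectivity argument in your final paragraph is false for the non-square $\Upsilon\to\Phi$ and $\Phi\to\Psi$ kernels ($3\times 4$ and $2\times 3$ matrices), but you do not need it, because your Step~2 already gives $R(s_2)>R(s_1)$ for all $s_1<s_2$ whenever $r$ is nonconstant, so the output ratio can never be constant.
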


\begin{proof} (i).
By \emph{(12)}, each of $t(x),u(x),v(x)$ is a rational function of $x^k$
with strictly positive denominator on $(0,\infty)$; hence they are $C^\infty$.
The same holds for $y(x)$ and $z(x)$ by \emph{(12)}, and therefore for
$p(x)=y(x)^k$ and $q(x)=z(x)^k$ in \emph{(15)}–\emph{(16)}.

The denominator of $F$,
\[
  \theta^2 p(x,\theta,k) + \theta + q(x,\theta,k),
\]
is strictly positive for all $x>0$, so $F$ is real-analytic and strictly
positive on $(0,\infty)$.

For $J>0$, Lemma~\ref{lem:Ktp2} gives TP2 of the edge kernel. By
Theorem~\ref{thm:tp2-attractive} and Proposition~\ref{prop:mlr}, increasing the
boundary-law ratio at children increases (in the MLR sense) the parent message,
hence increases the induced likelihood ratio at the parent.
Under translation invariance, all child ratios are equal to $x$, and the parent
ratio is exactly the update $f(x,\theta,k)$; therefore $f(\cdot,\theta,k)$ is strictly increasing.

(ii). The existence and finiteness of $L_0(\theta,k)$ and $L_\infty(\theta,k)$
follow from the fact that, as $x\to0$ or $x\to\infty$, each of
$t(x),u(x),v(x)$ converges to a finite positive limit (given by the leading
terms in the rational expressions), and therefore so do $y(x),z(x)$, hence
$p(x),q(x)$ and finally $F(x)$.

(iii). The identity $F(1)=1$ is checked directly by substituting $x=1$
into \emph{(12)}–\emph{(16)}, and corresponds to the disordered
translation-invariant splitting Gibbs measure.
\end{proof}

The TISGM corresponding to the solution $x_0=1$ is denoted by $\mu_0$, and it is called the \textit{disordered phase} of the model.

\begin{proposition}\label{prop:global-iter}
Fix $k\ge1$ and $\theta>1$, and let $F(x)=f(x,\theta,k)$ as above. For any
initial value $x_0>0$ consider the recursion
\[
   x_{n+1} = F(x_n), \qquad n\ge0.
\]
Then:
\begin{enumerate}
\item[(i)] The orbit $(x_n)_{n\ge0}$ is monotone from the first step onward:
      either $(x_n)$ is nondecreasing, or it is nonincreasing.
\item[(ii)] The sequence $(x_n)$ converges to a limit $\ell(x_0)\in(0,\infty)$, and it is a fixed point of $F$.
\end{enumerate}
\end{proposition}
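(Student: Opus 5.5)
The argument uses only the two structural properties of $F$ from Lemma~\ref{lem:basic-F}: $F$ is strictly increasing and continuous on $(0,\infty)$, and its image lies in the compact interval $I(\theta,k)\subset(0,\infty)$. This is the standard monotone-iteration argument for increasing maps of an interval.

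\emph{Monotonicity, part (i).} Compare $x_0$ and $x_1=F(x_0)$. If $x_1\ge x_0$, apply $F$: since $F$ is increasing, $x_2=F(x_1)\ge F(x_0)=x_1$. By induction, $x_{n+1}\ge x_n$ implies $x_{n+2}=F(x_{n+1})\ge F(x_n)=x_{n+1}$, so the whole orbit is nondecreasing. If instead $x_1\le x_0$, the same induction with reversed inequalities shows the orbit is nonincreasing. Strict monotonicity of $F$ even makes the orbit either constant (when $x_0$ is a fixed point) or strictly monotone. The phrase ``from the first step onward'' is thus satisfied already at $n=0$.

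\emph{Boundedness and convergence, part (ii).} For $n\ge1$ we have $x_n\in F((0,\infty))\subset I(\theta,k)=[a,b]$, with $a=\min\{L_0,L_\infty\}>0$ and $b=\max\{L_0,L_\infty\}<\infty$. This holds because $F$ is increasing with limits $L_0$ and $L_\infty$, so its values lie between them. A monotone sequence that is eventually contained in $[a,b]$ converges to some $\ell(x_0)\in[a,b]\subset(0,\infty)$.

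\emph{The limit is a fixed point.} $F$ is continuous (indeed real-analytic) at $\ell(x_0)>0$. Passing to the limit in $x_{n+1}=F(x_n)$ therefore gives $\ell(x_0)=F(\ell(x_0))$.

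The only delicate point is keeping the limit away from $0$ and $\infty$, so that continuity of $F$ on the open half-line can be used. This is exactly what the positivity and finiteness of $L_0$ and $L_\infty$ in Lemma~\ref{lem:basic-F}(ii) provide. Everything else is routine.
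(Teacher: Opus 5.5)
Your proposal is correct and follows the paper's proof essentially step for step. Both use the same three ingredients: induction from $x_1\gtrless x_0$ using that $F$ is increasing, boundedness because the image of $F$ lies in $I(\theta,k)$, and passage to the limit in $x_{n+1}=F(x_n)$ by continuity. Your observation that only the terms $x_n$ with $n\ge1$ are guaranteed to lie in $I(\theta,k)$, so that the limit lies in $I(\theta,k)\subset(0,\infty)$, slightly sharpens the paper's wording.
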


\begin{proof} (i).
By Lemma~\ref{lem:basic-F}, $F$ is strictly increasing and maps $(0,\infty)$
into the bounded interval $I(\theta,k)$.

If $x_1\ge x_0$, then by monotonicity,
\[
  x_2 = F(x_1) \ge F(x_0)=x_1,\quad
  x_3 = F(x_2) \ge F(x_1)=x_2,\ \ldots,
\]
and hence $(x_n)$ is nondecreasing. If $x_1\le x_0$, the same argument with
reversed inequalities shows that $(x_n)$ is nonincreasing. In both cases,
$(x_n)$ is bounded (by $I(\theta,k)$), hence convergent to some
$\ell(x_0)\in I(\theta,k)$.

(ii) By continuity of $F$,
\[
  F(\ell(x_0)) = F\Bigl(\lim_{n\to\infty} x_n\Bigr)
               = \lim_{n\to\infty} F(x_n)
               = \lim_{n\to\infty} x_{n+1}
               = \ell(x_0),
\]
so $\ell(x_0)$ is a fixed point of $F$.
\end{proof}

\begin{proposition}\label{prop:linear-stability}
Let $x_*>0$ be a fixed point of $F$, i.e.\ $F(x_*)=x_*$. Then:
\begin{enumerate}
\item If $0\le F'(x_*)<1$, there exists $\delta>0$ such that for all
      $x_0\in(x_*-\delta,x_*+\delta)$ we have $x_n\to x_*$ as $n\to\infty$.
\item If $F'(x_*)>1$, there exists $\delta>0$ such that for all
      $x_0\in(x_*-\delta,x_*+\delta)\setminus\{x_*\}$, the orbit $(x_n)$
      eventually leaves $(x_*-\delta,x_*+\delta)$ and does not converge to $x_*$.
\end{enumerate}
\end{proposition}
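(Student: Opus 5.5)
The plan uses only two facts: $F$ is $C^1$ near $x_*$ (Lemma~\ref{lem:basic-F}(i) gives real-analyticity), and $F$ is strictly increasing. Both parts of Proposition~\ref{prop:linear-stability} then come from a first-order Taylor expansion at $x_*$, combined with the mean value theorem.

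\emph{Part 1.} Suppose $0\le F'(x_*)<1$. Fix $\rho$ with $F'(x_*)<\rho<1$. By continuity of $F'$ there is $\delta>0$ such that $|F'(\xi)|\le\rho$ for all $\xi\in(x_*-\delta,x_*+\delta)$; we may shrink $\delta$ so that this interval lies in $(0,\infty)$. For $x$ in this interval, the mean value theorem and $F(x_*)=x_*$ give
\[
|F(x)-x_*|=|F(x)-F(x_*)|\le\rho\,|x-x_*|<\delta .
\]
So the interval is forward invariant. By induction $|x_n-x_*|\le\rho^n|x_0-x_*|\to0$. Monotonicity of $F$ is not needed here.

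\emph{Part 2.} Suppose $F'(x_*)>1$. Fix $1<\kappa<F'(x_*)$ and choose $\delta>0$ with $F'(\xi)\ge\kappa$ on $(x_*-\delta,x_*+\delta)\subset(0,\infty)$. Take $x_0\neq x_*$ in this interval and suppose $x_0,\dots,x_n$ all stay in it. The mean value theorem gives $x_{j+1}-x_*=F'(\xi_j)(x_j-x_*)$ with $F'(\xi_j)\ge\kappa>0$. Two things follow. First, the sign of $x_j-x_*$ is preserved. Second, $|x_{j+1}-x_*|\ge\kappa|x_j-x_*|$, so $|x_n-x_*|\ge\kappa^n|x_0-x_*|$. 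Since $\kappa^n|x_0-x_*|\to\infty$, the orbit cannot stay in the interval for all $n$; it exits at some finite time $N$.

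Non-convergence to $x_*$ needs one more argument. By Proposition~\ref{prop:global-iter} the orbit is monotone, and the first step moves it away from $x_*$. Indeed, if $x_0>x_*$ then $x_1-x_*\ge\kappa(x_0-x_*)>x_0-x_*$, so $x_1>x_0$; the case $x_0<x_*$ is symmetric. Hence $(x_n)$ is monotone in the direction away from $x_*$, and $|x_n-x_*|\ge|x_1-x_*|>0$ for all $n$. In particular $x_n\not\to x_*$, and once the orbit leaves the interval it never returns.

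The only delicate point is this non-return argument in part 2. The local expansion alone shows only that the orbit exits the neighbourhood; it does not rule out a later return. Combining the direction of the first step with the global monotonicity from Proposition~\ref{prop:global-iter} closes that gap.
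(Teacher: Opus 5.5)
Your proof is correct. The paper offers no proof of its own beyond calling the result ``evident,'' and your mean-value-theorem argument (contraction with ratio $\rho<1$ in part~1, expansion with ratio $\kappa>1$ in part~2) is the standard linearization argument being alluded to, with the details written out.

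The only non-local step is showing in part~2 that the orbit cannot converge to $x_*$ after it exits. You handle it correctly through the monotonicity of $F$ from Lemma~\ref{lem:basic-F}(i), via Proposition~\ref{prop:global-iter}; even weak monotonicity would do, and it holds under the standing assumption $\theta>1$. Some such global property is genuinely needed, because for a non-injective map an orbit could leave the neighbourhood and later land exactly on $x_*$.
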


The proof is evident.

\begin{cor}\label{cor:stability-1}
The point $x=1$ is a fixed point of $F$ for all $k\ge1$ and $\theta>1$.
Let
\[
  s_k(\theta) := F'(1) - 1 = f'(1,\theta,k)-1.
\]
Then:
\begin{enumerate}
\item If $s_k(\theta)<0$, the fixed point $x=1$ is locally attracting.
\item If $s_k(\theta)>0$, the fixed point $x=1$ is locally repelling.
\item If $s_k(\theta)=0$, the fixed point is linearly neutral (critical).
\end{enumerate}
\end{cor}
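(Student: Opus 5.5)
The corollary reduces to Proposition~\ref{prop:linear-stability} once we know that $x=1$ is a fixed point and where $F'(1)$ sits. The plan is to first invoke Lemma~\ref{lem:basic-F}(iii), which gives $F(1)=1$ for all $k\ge1$ and $\theta>1$. This is checked directly: at $x=1$ one has $t=v$ (both equal $(1+\theta^3)/(\theta^2+\theta)$) and $u=1$. Hence the numerators of $y$ and $z$ coincide, since they are the same expression with $t$ and $v$ interchanged, so $y=z$ and $p=q$. Then $F(1)=(p+\theta+\theta^2 p)/(\theta^2 p+\theta+p)=1$.

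Next we locate $F'(1)$. By Lemma~\ref{lem:basic-F}(i), $F$ is real-analytic and strictly increasing, so $F'(1)\ge 0$. This places us in the setting of Proposition~\ref{prop:linear-stability}, whose first case requires $0\le F'(x_*)<1$.

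Case 1: $s_k(\theta)<0$. Then $0\le F'(1)<1$, and part (1) of Proposition~\ref{prop:linear-stability} gives local attraction. To make the ``evident'' argument explicit, use continuity of $F'$ to pick $\delta$ with $|F'|\le c<1$ on $(1-\delta,1+\delta)$. The mean value theorem then gives $|x_{n+1}-1|\le c|x_n-1|$, so $x_n\to 1$ geometrically.

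Case 2: $s_k(\theta)>0$. Choose $\delta$ with $F'\ge c>1$ near $1$. Then $|x_{n+1}-1|\ge c|x_n-1|$ as long as the orbit stays in the neighbourhood, so it must leave, which is local repulsion. Monotonicity of $F$ also yields the sign information: the orbit moves monotonically away from $1$, consistent with Proposition~\ref{prop:global-iter}.

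Case 3: $s_k(\theta)=0$. Here $F'(1)=1$, and this is neutral by definition; no dynamical claim is asserted.

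The only genuine point to guard against is a negative derivative, which would fall outside the range $[0,1)$ covered by the first case of Proposition~\ref{prop:linear-stability}. This is ruled out by the monotonicity statement in Lemma~\ref{lem:basic-F}(i), which I take as given. If one wanted independence from the TP2 argument, I would instead differentiate the composition $x\mapsto(t,u,v)\mapsto(y,z)\mapsto F$ at $x=1$ and verify positivity of each factor for $\theta>1$. That chain-rule computation is the main (routine but messy) obstacle, and it is not needed given the earlier lemma.
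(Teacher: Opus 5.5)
Your proposal is correct and follows the same route the paper takes. The paper obtains $F(1)=1$ from Lemma~\ref{lem:basic-F}(iii); you make this explicit by checking $t=v$ and $u=1$, hence $p=q$. Both then apply Proposition~\ref{prop:linear-stability} at $x_*=1$, with the monotonicity of $F$ supplying $F'(1)\ge 0$ so that its first case applies. Your mean-value-theorem estimates just write out what the paper calls evident.
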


We arrive our first main result.

\begin{theorem}\label{thm10}
Let $k\ge2$ and $J>0$, and put $\theta=\exp(\beta J/2)$.
Consider the triple mixed-spin Ising model with spin values
$\bigl(\tfrac12,1,\tfrac32\bigr)$ on the Cayley tree $\Gamma_k$, where the
spin species $\Psi,\Phi,\Upsilon$ are arranged periodically with period three
along the generations.

\begin{enumerate}
\item For all $(\theta,k)$ there exists at least one fixed point
      $x_*=1$, corresponding to the disordered TISGM~$\mu_0$.
 \item
      If $s_k(\theta)>0$, then the fixed-point equation
\[
  x \;=\; f(x,\theta,k)
\]
admits at least three distinct positive solutions. Consequently, for every such
pair $(\theta,k)$ the model admits at least three $\mu_0$,$\mu_1$ and $\mu_2$ distinct translation-invariant
splitting Gibbs measures.
\end{enumerate}
\end{theorem}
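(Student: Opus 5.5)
The plan is an intermediate value argument applied to $G(x):=F(x)-x$ on $(0,\infty)$, using the properties of $F$ collected in Lemma~\ref{lem:basic-F}.

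\emph{Part 1.} This is immediate from Lemma~\ref{lem:basic-F}(iii): $F(1)=1$, so $x_*=1$ is a fixed point of $F$ for every $(\theta,k)$. By Theorem~\ref{thm1}, the constant solution it determines through \eqref{69} yields the TISGM $\mu_0$.

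\emph{Part 2.} By Lemma~\ref{lem:basic-F}(i), $F$ is real-analytic, so $G$ is continuous and differentiable, with $G(1)=0$ and $G'(1)=F'(1)-1=s_k(\theta)>0$. From the definition of the derivative we obtain $\delta\in(0,1)$ such that
\[
G(1-\delta)<0<G(1+\delta).
\]
Next we use the boundary behaviour from Lemma~\ref{lem:basic-F}(ii). As $x\downarrow 0$ we have $F(x)\to L_0(\theta,k)>0$, hence $G(x)\to L_0>0$, so $G(\varepsilon)>0$ for all sufficiently small $\varepsilon\in(0,1-\delta)$. As $x\to\infty$ we have $F(x)\to L_\infty<\infty$, hence $G(x)\to-\infty$, so $G(M)<0$ for some $M>1+\delta$.

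The intermediate value theorem now gives a zero $x^-\in(\varepsilon,1-\delta)$ and a zero $x^+\in(1+\delta,M)$. Together with $x=1$ these are three distinct positive fixed points, $x^-<1<x^+$. An alternative route uses Proposition~\ref{prop:global-iter}: iterating from a point just below or just above $1$ produces monotone orbits that, by Corollary~\ref{cor:stability-1}, move away from $1$ and converge to fixed points on either side of $1$. The intermediate value argument is cleaner, so we use it.

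\emph{From fixed points to measures.} For each of the three fixed points $x\in\{x^-,1,x^+\}$ we define $t,u,v$ by the last three equations of \eqref{69}, then $y,z$ by the second and third. The fixed-point equation $x=f(x,\theta,k)$ is exactly the first equation of \eqref{69}, so the whole system holds. Raising to the $k$-th power gives a positive solution $(X,Y,Z,T,U,N)$ of \eqref{68}, which is a constant solution of \eqref{67}. By Theorem~\ref{thm1} each such solution gives a compatible family $\{\mu_n^h\}$ and hence a TISGM.

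\emph{Main obstacle.} The delicate point is showing that the three measures $\mu_0,\mu_1,\mu_2$ are distinct. The map $x\mapsto X=x^k$ is injective on $(0,\infty)$. Moreover, $X_x$ is the ratio of the single-site boundary-law weights $\exp(\tilde h_{1/2,x})/\exp(\tilde h_{-1/2,x})$ on $\Psi$-vertices. Distinct values of $X$ therefore give distinct conditional laws of $\sigma(x^{(0)})$ given the rest, i.e.\ different one-site marginals at the root, so the measures differ. More directly, for the measure with parameter $X$ the root marginal is proportional to $(1,X)$ up to kernel factors, and this proportion is strictly monotone in $X$.

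One should also check that the hypothesis $k\ge2$ is not needed in the argument above. It presumably only guarantees that the set $\{\theta : s_k(\theta)>0\}$ is nonempty, which is a separate numerical question not required for the implication.
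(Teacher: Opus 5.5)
Your proof is correct and is essentially the argument the paper intends. The paper writes out no separate proof of Theorem~\ref{thm10}, but Lemma~\ref{lem:basic-F}, Proposition~\ref{prop:global-iter} and Corollary~\ref{cor:stability-1} are assembled so that $F(1)=1$, $F'(1)>1$, $L_0>0$ and $L_\infty<\infty$ force a fixed point on each side of $1$; those tools point to the monotone-orbit version, which uses monotonicity of $F$, whereas your intermediate value version needs only continuity, differentiability at $1$ and the two boundary limits.

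One correction in the distinctness step: drop the phrase about the conditional law of $\sigma(x^{(0)})$ given the rest, since that law is fixed by the specification and is the same for every Gibbs measure. The correct reason is the one you give next: summing $\mu_1^h$ over the first generation and using the first equation of \eqref{67} at the root gives $\mu\bigl(\sigma(x^{(0)})=\tfrac12\bigr)/\mu\bigl(\sigma(x^{(0)})=-\tfrac12\bigr)=X=x^k$ exactly, with no extra kernel factors. Hence distinct fixed points give distinct root marginals.
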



Our sufficient condition \(s_k(\theta)>0\) for the existence of at least three TISGMs is in the same spirit as linear-instability criteria used in earlier mixed--spin analyses on trees (see e.g. \cite{akin2018gibbs,akin20231,akin20232,akin2024}), where bifurcation from the symmetric fixed point is detected via the derivative of the corresponding recursion. The novelty of Theorem~\ref{thm10} lies in handling the period--3 triple--spin arrangement \((\tfrac12,1,\tfrac32)\): the scalar map \(f\) in \eqref{eq14} encodes nested interactions across three layers and produces a richer dependence on \(\theta\) and the tree order \(k\) than in the two-layer mixed--spin models of \cite{akin2018gibbs}. Numerically (Figures~\ref{fig111},\ref{fig112},\ref{fig113}) we observe that the instability region \(\{\theta:s_k(\theta)>0\}\) enlarges with \(k\), indicating that the period--3 inhomogeneity facilitates multiplicity over a wider parameter range than the simpler mixed--spin settings treated previously.

\begin{figure}[h]
\centering
\includegraphics[width=12cm]{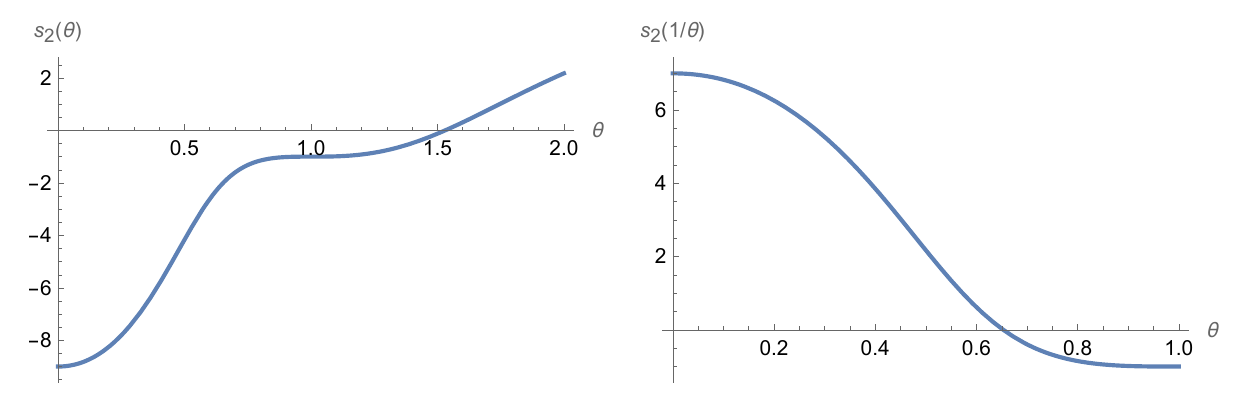}
\caption{Plots of $s_2(\theta)$ for $\theta \in (0,2)$ (left) and $s_2(1/\theta)$ for $\theta \in (0,1)$ (right). The change of variables $\theta \mapsto 1/\theta$ allows us to visualize
and confirm the monotonicity of $s_2(\theta)$ on $(1,\infty)$.}\label{fig111}
\end{figure}
\begin{figure}[t]
\centering
\includegraphics[width=0.7\textwidth]{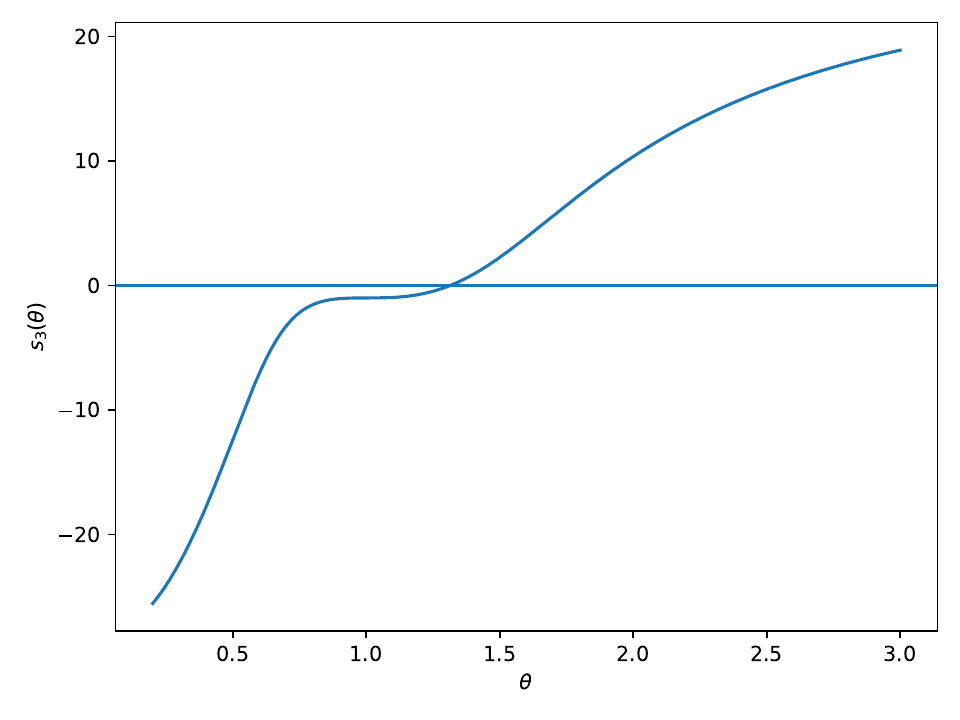}
\caption{Plot of $s_3(\theta)=f'(1,\theta,3)-1$. The region where $s_3(\theta)>0$
corresponds to the existence of at least three translation-invariant splitting
Gibbs measures.}\label{fig112}
\end{figure}
\begin{figure}[h]
\centering
\includegraphics[width=12cm]{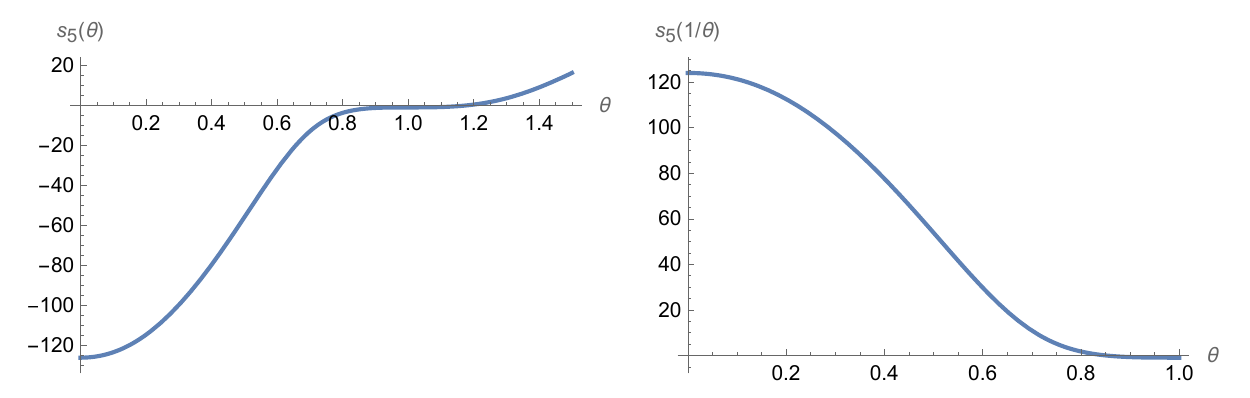}
\caption{Graphs of $s_5(\theta)$ for $\theta \in (0,1.5)$ (left) and
$s_5(1/\theta)$ for $\theta \in (0,1)$ (right).}\label{fig113}
\end{figure}

For instance, in the case $k=3$, numerical evaluation of $s_3(\theta)$ shows that there exists a critical value $\theta_c \approx 1.3$ such that $s_3(\theta)>0$ for all $\theta>\theta_c$, confirming the onset of multiple TISGMs. Numerical evaluation indicates that for $k=3$ the function $s_3(\theta)$ becomes positive above a critical value $\theta_c$, which signals the instability of the
disordered fixed point and the appearance of multiple TISGMs.

Here is a table of critical values of $\theta_c$.\\
\begin{tabular}{rr}
\toprule
$k$ & $\theta_c$ \\
\midrule
2 & 1.47626086 \\
3 & 1.29940412 \\
4 & 1.23599394 \\
5 & 1.17737002 \\
\bottomrule
\end{tabular}

\section{Extremality of ordered TISGMs for $k=2$}

In this section we restrict attention to the Cayley tree of order $k=2$
and study the extremality of the ordered translation-invariant splitting
Gibbs measures (TISGMs) arising from the scalar recursion
\begin{equation}\label{eq:scalar-k2}
  x = f(x,\theta,2),
\end{equation}
where $f$ is given by \eqref{eq14}.

In what follows, as before, we assume
 that $J>0$ and hence $\theta = \exp(\beta J/2) > 1$.

For notational convenience we write
\[
   F_2(x) := f(x,\theta,2), \qquad x>0.
\]
By Lemma~\ref{lem:basic-F} (i) the map $x\mapsto F_2(x,\theta)$ is strictly increasing on
$(0,\infty)$ for all $\theta>1$, and satisfies
\[
   F_2(0,\theta) > 0, \qquad \lim_{x\to\infty} F_2(x,\theta) < \infty,
   \qquad F_2(1,\theta)=1.
\]

Throughout the section, we always suppose $s_2(\theta)>0$, which indicates the function
$F_2(x)$ admits three distinct solutions $x_-(\theta), x_0=1, x_+(\theta)$ such that
$x_-(\theta)<1<x_+(\theta)$.

We interpret $x_-(\theta)$ and $x_+(\theta)$ as the \emph{minimal} and
\emph{maximal} boundary-law ratios at the $\Psi$-levels (spins
$\pm\frac12$) compatible with translation invariance, cf. the reduction
of \eqref{67} to the scalar equation
\eqref{eq:scalar-k2}.

Recall that for $J>0$ the two-site kernel
$K(s,t)=\exp(\beta J s t)$ is totally positive of order~2 (TP2), hence
the Gibbs specification is attractive in the FKG sense.
In particular, the finite-volume Gibbs measures with deterministic
boundary condition $\xi_{\partial V_n}$ are monotone in $\xi_{\partial
  V_n}$ with respect to the natural partial order on spin configurations.

Let $\xi^+_{\partial V_n}$ (resp.\ $\xi^-_{\partial V_n}$) denote the
boundary condition where all boundary spins are fixed to their maximal
(resp.\ minimal) values:
\[
  \xi^+_{\partial V_n}(x) =
   \begin{cases}
     +\tfrac12, & x\in\Gamma^k_0\cap W_n,\\
     +1,        & x\in\Gamma^k_1\cap W_n,\\
     +\tfrac32, & x\in\Gamma^k_2\cap W_n,
   \end{cases}
   \qquad
  \xi^-_{\partial V_n}(x) =
   \begin{cases}
     -\tfrac12, & x\in\Gamma^k_0\cap W_n,\\
     -1,        & x\in\Gamma^k_1\cap W_n,\\
     -\tfrac32, & x\in\Gamma^k_2\cap W_n,
   \end{cases}
\]
where $k=2$ in our setting.

Denote by $\mu^{+,n}$ and $\mu^{-,n}$ the corresponding finite-volume
Gibbs measures on $V_n$, and let $\mu^+$ and $\mu^-$ be their weak
limits along an increasing sequence of volumes:
\[
  \mu^\pm := \lim_{n\to\infty} \mu^{\pm,n},
\]
where the limit exists by monotonicity and compactness of the space of
probability measures. Both limits are Gibbs measures for the infinite
volume system.

\begin{proposition}\label{prop:min-max-G}
Assume $J>0$ and  consider the Ising model with triple mixed spins
$\bigl(\tfrac12,1,\tfrac32\bigr)$ on a Cayley tree $\Gamma_2$ of order $k=2$. Let $\preccurlyeq$ denote the
coordinatewise stochastic order on probability measures induced by the natural
partial order on the spin space. Then:
\begin{enumerate}
\item The weak limits
      \[
         \mu^- := \lim_{n\to\infty}\mu^{-,n}, \qquad
         \mu^+ := \lim_{n\to\infty}\mu^{+,n},
      \]
      where $\mu^{\pm,n}$ are the finite-volume Gibbs measures in $V_n$ with
      all boundary spins fixed to their minimal (resp.\ maximal) values, exist
      and are Gibbs measures for the model.
\item The measures $\mu^-$ and $\mu^+$ are splitting Gibbs measures and are
      translation-invariant with the same period-$3$ structure as the
      interaction (i.e.\ invariant under all automorphisms of $\Gamma_2$ which
      preserve the decomposition $\Gamma^2_0,\Gamma^2_1,\Gamma^2_2$).
\item For any Gibbs measure $\mu$ of the model one has
      \[
         \mu^- \;\preccurlyeq\; \mu \;\preccurlyeq\; \mu^+.
      \]
      Hence $\mu^-$ and $\mu^+$ are the minimal and maximal Gibbs measures in
      this stochastic order.
\end{enumerate}
\end{proposition}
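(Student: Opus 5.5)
The plan is the standard monotonicity argument for attractive specifications, built on Theorem~\ref{thm:tp2-attractive}. \textbf{Step 1 (monotone sequences).} For $J>0$ the specification is attractive. Write $\gamma_{V_n}(\cdot\mid\xi^+)$ for the finite-volume measure with maximal boundary on $W_{n+1}$. Viewed as a measure on $V_n$, its restriction to $V_{n-1}$ is $\gamma_{V_{n-1}}(\cdot\mid\eta)$ averaged over the random boundary $\eta$ on $W_n$. Every such $\eta$ satisfies $\eta\le\xi^+$ pointwise. Attractiveness \eqref{eq:attr} therefore gives
\[
\mathbb E_{\mu^{+,n+1}}[F]\le \mathbb E_{\mu^{+,n}}[F]
\]
for every increasing local $F$ depending on $V_{n-1}$. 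Hence $\mu^{+,n}(F)$ is nonincreasing in $n$, and symmetrically $\mu^{-,n}(F)$ is nondecreasing. Increasing cylinder functions $\mathbf 1\{\xi\ge\eta\ \text{on } A\}$ generate the cylinder algebra, since each spin set is finite and totally ordered. Consequently the limits exist for all cylinder events, and together with compactness this gives weak convergence. The limits are Gibbs measures by the usual argument: $\mu^{\pm,n}$ satisfy the DLR equations for $\Lambda\subset V_{n-1}$, and $\gamma_\Lambda$ is quasilocal (finite range), so DLR passes to the limit.

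\textbf{Step 2 (splitting and invariance).} On the tree, $\mu^{+,n}$ is exactly of the form \eqref{eq2} with a boundary field concentrated on the maximal spin. Summing out successive generations, one writes $\mu^{+,n}$ restricted to $V_m$ via boundary laws $h^{(n)}_x$, $x\in W_m$, obtained by iterating the recursion \eqref{67}. Because the boundary condition is the same at every vertex of a given generation, and the recursion respects the period-$3$ structure, $h^{(n)}_x$ depends only on the generation of $x$. After three steps one gets the scalar iteration $x_{j+1}=F_2(x_j)$, started from the extreme value $x_0=+\infty$, i.e.\ from $L_\infty$. By Proposition~\ref{prop:global-iter} this iteration is monotone and converges to a fixed point. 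Passing to the limit, $\mu^+$ is a splitting Gibbs measure with translation-invariant boundary law. It is therefore a TISGM invariant under automorphisms preserving $\Gamma^2_0,\Gamma^2_1,\Gamma^2_2$, and likewise for $\mu^-$ started from $L_0$. A point needing care is that $V_n$ ends at generations of different residues mod $3$. I would handle it by taking limits along $n\equiv r \pmod 3$ for each $r$. Uniqueness of the monotone limit, together with Step 1 (where the whole sequence converges), shows that all three subsequences agree.

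\textbf{Step 3 (extremality in order).} Let $\mu$ be any Gibbs measure and $F$ an increasing local function on $V_{n-1}$. By the DLR equation,
\[
\mu(F)=\int \gamma_{V_n}(F\mid\eta)\,\mu(d\eta),
\]
with boundary $\eta$ on $W_{n+1}$. Attractiveness and $\xi^-\le\eta\le\xi^+$ give
\[
\mu^{-,n}(F)\le\gamma_{V_n}(F\mid\eta)\le\mu^{+,n}(F).
\]
Integrating and letting $n\to\infty$ yields $\mu^-\preccurlyeq\mu\preccurlyeq\mu^+$.

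The main obstacle is Step 2. It requires showing rigorously that the finite-volume measures with fixed boundary coincide with the measures \eqref{eq2} for suitable boundary laws, which one gets by taking $h$ infinite, or as a limit of large fields. It also requires that the compatible limit is again a splitting measure with the limiting boundary law. I would first treat the fixed boundary as the $h\to\pm\infty$ limit of \eqref{eq2}. Then I would use continuity of the rational maps in \eqref{67} on the extended half-line, together with the convergence in Proposition~\ref{prop:global-iter}.
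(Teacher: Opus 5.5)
Your proposal is correct. Steps 1 and 3 are the paper's argument: attractiveness makes $\mu^{\pm,n}(F)$ monotone, the DLR equations pass to the limit, and the bounds $\gamma_{V_n}(F\mid\xi^-)\le\gamma_{V_n}(F\mid\eta)\le\gamma_{V_n}(F\mid\xi^+)$ are integrated against $\mu$. Your choice $\Lambda=V_n$ is actually cleaner than the appendix, which conflates a fixed $\Lambda$ with $V_n$.

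Part (ii) is where you take a genuinely different route. The paper gets invariance from the symmetry of $\xi^\pm$ and of the Hamiltonian under automorphisms. It gets the splitting property by asserting that every Gibbs measure of a nearest-neighbour model on a tree admits a boundary-law representation. That assertion holds for extremal Gibbs measures (Zachary's theorem) but not for arbitrary ones, since mixtures are in general not tree-indexed Markov chains. To be made rigorous, the paper's route would have to establish (iii) first, deduce that $\mu^\pm$ are extremal, and then invoke the boundary-law representation of extremal measures. There is a second issue: every automorphism of the semi-infinite tree fixes the root. Symmetry therefore only makes the boundary law depend on the generation; it does not yield the constant solution of \eqref{68} that defines a TISGM.

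You compute the marginals of $\mu^{+,n}$ through \eqref{67} and use the monotone convergence of Proposition~\ref{prop:global-iter}. This avoids both issues. You obtain directly a boundary law that is constant on each class $\Gamma^2_i$. You also identify it as the limit of the orbit started from $+\infty$, i.e.\ the largest fixed point, which gives the first assertion of Lemma~\ref{lem:pm-fixed-points} essentially for free. The cost is that your argument is tied to the explicit recursion and to the monotonicity of $F$, whereas the repaired structural route would apply to any attractive tree model.

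The step you single out as the main obstacle is lighter than you expect. With $\xi^+$ fixed on $W_{n+1}$, the measure $\gamma_{V_n}(\cdot\mid\xi^+)$ is literally $\mu^h_n$ from \eqref{eq2} with the finite field $h_s(x)=\beta Jk\,s\,s'_{\max}$, where $s'_{\max}$ is the top spin of the level $W_{n+1}$. This is exactly what \eqref{67} returns when you insert $X=+\infty$ (resp.\ $Y=0$, $Z=\infty$; resp.\ $N\to\infty$ with $T/N,U/N\to0$) on $W_{n+1}$. No $h\to\infty$ limit is needed.
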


\begin{remark}
Proposition~\ref{prop:min-max-G} do not rely on the number of fixed points of
the scalar map $f_2(\cdot,\theta)$. They hold for all $\theta>1$ in the
ferromagnetic regime, independently of whether the equation $x=f_2(x,\theta)$
has one or several solutions. The assumption that $f_2(\cdot,\theta)$ has
exactly three positive fixed points is only used later, in
Theorem~\ref{thm:dichotomy-k2}, to conclude that the set of
translation-invariant splitting Gibbs measures is
$\{\mu_-,\mu_0,\mu_+\}$ and that these three measures are distinct.
\end{remark}

Using the boundary-law representation from Theorem~\ref{thm1},
each TISGM corresponds to a constant solution of system~\eqref{68},
equivalently to a fixed point of the scalar map $f_2(\cdot,\theta)$.

\begin{lem}\label{lem:pm-fixed-points}
Let $J>0$, $k=2$ and $\theta=e^{\beta J/2}>1$.
Let $\mu^-$ and $\mu^+$ be the minimal and maximal Gibbs measures from
Proposition~\ref{prop:min-max-G}. Then:
\begin{enumerate}
\item The measures $\mu^-$ and $\mu^+$ are translation-invariant splitting
      Gibbs measures corresponding to constant boundary laws whose
      $\Psi$-component likelihood ratios are $x_-(\theta)$ and $x_+(\theta)$,
      respectively. Equivalently,
      \[
        \mu^- = \mu_{x_-(\theta)}, \qquad
        \mu^+ = \mu_{x_+(\theta)}.
      \]
  Moreover, the measures $\mu^-$ and $\mu^+$ are extremal Gibbs measures.
\end{enumerate}
\end{lem}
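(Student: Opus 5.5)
The plan is to identify $\mu^\pm$ through the boundary-law recursion started from the extreme boundary conditions, and then deduce extremality from the sandwich in Proposition~\ref{prop:min-max-G}. I treat $\mu^+$; the case of $\mu^-$ is symmetric, with all inequalities reversed.

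\textbf{Step 1 (finite-volume boundary laws).} Fix $n$. The measure $\mu^{+,n}$ has all spins on $W_n$ frozen at their maximal values. Integrating out the subtrees level by level, as in the derivation of Theorem~\ref{thm1}, expresses the marginal of $\mu^{+,n}$ on $V_m$, $m<n$, through a boundary law. This boundary law is obtained by applying the translation-invariant recursion \eqref{69} $n-m$ times to the degenerate initial message concentrated on the top spin. By the symmetry of $\Gamma_2$ and of $\xi^+$, the message at level $m$ depends only on $m$ and on the residue of $n-m$ modulo $3$, not on the vertex. Following it back to the $\Psi$-levels, the $\Psi$-likelihood ratio at a $\Psi$-level at distance $3j$ from the boundary equals $F_2^{\,j}(a_r)$. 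Here $a_r$ (for $r=0,1,2$, the residue of $n$ modulo $3$) is the ratio produced after the first partial period from the point-mass message. A point mass is a limit of positive messages, so $a_r\in[L_0,L_\infty]$, and $a_r$ lies in the range of $F_2$ (or is its endpoint $\sup F_2=L_\infty$).

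\textbf{Step 2 (monotone convergence to $x_+$).} By Proposition~\ref{prop:mlr}, the top point mass is MLR-maximal among all messages, so every iterate started from it dominates the iterate started from the message of $x_+$. This gives $a_r\ge F_2(\cdot)$ on the corresponding input, and in particular $a_r\ge x_+$, since $x_+=F_2(x_+)\le \sup F_2$. By Proposition~\ref{prop:global-iter}, $F_2^{\,j}(a_r)$ is monotone and converges to a fixed point. Since $F_2$ is increasing and $a_r\ge x_+$, we have $F_2^{\,j}(a_r)\ge x_+$ for all $j$. The limit is therefore a fixed point that is $\ge x_+$. It equals $x_+$ provided $x_+$ is taken to be the largest fixed point, which is how I would define $x_+(\theta)$; it exists because the fixed-point set is compact in $I(\theta,2)$. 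The $\Phi$- and $\Upsilon$-components are continuous functions of $x$ via \eqref{69}, so they converge to the values determined by $x_+$. Passing to the limit in the finite-dimensional marginals then shows that $\mu^+$ has the constant boundary law of $x_+$, i.e.\ $\mu^+=\mu_{x_+(\theta)}$. This also settles the splitting property and the translation invariance claimed in Proposition~\ref{prop:min-max-G}(2).

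\textbf{Step 3 (extremality).} Suppose $\mu^+=\lambda\nu_1+(1-\lambda)\nu_2$ with $\nu_i$ Gibbs and $0<\lambda<1$. Proposition~\ref{prop:min-max-G}(3) gives $\nu_i\preccurlyeq\mu^+$, so $\nu_i(F)\le\mu^+(F)$ for every increasing local $F$. Because $\mu^+(F)$ is the convex combination of $\nu_1(F)$ and $\nu_2(F)$, equality must hold for both. Increasing local functions (indicators of up-sets of local configurations) determine a measure on a product of finite totally ordered spaces. Hence $\nu_1=\nu_2=\mu^+$, and $\mu^+$ is extremal.

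\textbf{Main obstacle.} I expect the main difficulty to be Step 1–2: making rigorous that the weak limit is exactly the boundary-law measure, with the period-$3$ bookkeeping and the degenerate initial messages handled correctly. The issue is to control the three subsequences $n\equiv r \pmod 3$ separately and show that they share the same limit $x_+$. I would handle this using uniform convergence of the iterates from above, which follows from the monotonicity of $F_2$.
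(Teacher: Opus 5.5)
Your proof is correct, but it takes a different route from the paper's. The paper takes the splitting property and period-$3$ invariance of $\mu^\pm$ from Proposition~\ref{prop:min-max-G}(2). There this is justified by the claim that every Gibbs measure on a tree is splitting, which holds only for extremal ones. The paper then asserts $\mu^\pm=\mu_{x_\pm}$ and uses MLR propagation with a level-by-level monotone coupling of tree-indexed Markov chains to show $x_1<x_2\Rightarrow\mu_{x_1}\preccurlyeq\mu_{x_2}$. For extremality it cites a ``standard technique''.

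You instead compute the finite-volume boundary laws of $\mu^{+,n}$ and dominate them in MLR order by the fixed-point messages. The monotone orbit $F_2^{\,j}(a_r)$ then selects the largest fixed point. This route gains three things over the paper's:
\begin{itemize}
\item it proves that $\mu^+$ is a TISGM instead of presupposing it;
\item it identifies the fixed point of $\mu^+$ as the maximal one, a step the paper leaves implicit (it would need injectivity of $x\mapsto\mu_x$ together with Proposition~\ref{prop:min-max-G}(3));
\item it does not use the three-fixed-point hypothesis.
\end{itemize}
The paper's coupling does give the monotonicity of the whole family $x\mapsto\mu_x$, which you do not need. Your Step~3 is precisely the convex-combination argument behind the paper's appeal to \cite{G}.

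There is one slip, which does not affect the conclusion. The claim $a_r\in[L_0,L_\infty]$ is false when the frozen layer is a $\Phi$- or $\Upsilon$-level, because $L_0,L_\infty$ bound only ratios generated from $\Psi$-level messages. For example, freezing $+1$ on a $\Phi$-level gives $x=\theta^2>L_\infty$ at the parent $\Psi$-level. You only need $a_r\ge x_+$, and the MLR comparison gives this directly. Proposition~\ref{prop:global-iter} applies to any starting point in $(0,\infty)$, so the out-of-range value is harmless. Likewise, no uniformity is needed across residues: for each $r$ the ratio at a fixed level is a single orbit converging to $x_+$.
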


\begin{theorem}
\label{thm:dichotomy-k2}
Consider the triple mixed Ising model with spins
$(\tfrac12,1,\tfrac32)$ on a Cayley tree of order $k=2$ and coupling
$J>0$, and let $\theta=e^{\beta J/2}>1$.
Let $\mathcal G_T^{(2)}$ denote the set of translation-invariant
splitting Gibbs measures for this model.

\begin{enumerate}
\item Suppose that the fixed-point equation \eqref{eq:scalar-k2} has the
      unique solution $x=1$ (in particular, this holds whenever
      $s_2(\theta)\le 0$ and no additional fixed points appear). Then
      $\mathcal G_T^{(2)} = \{\mu_0\}$, where $\mu_0$ is the disordered
      TISGM corresponding to $x=1$.
\item Suppose that the fixed-point equation \eqref{eq:scalar-k2} has
      exactly three positive solutions
      \[
          x_-(\theta) < 1 < x_+(\theta).
      \]
      Then
      \[
         \mathcal G_T^{(2)} = \{\mu_-,\mu_0,\mu_+\},
      \]
      where $\mu_-$ and $\mu_+$ are the TISGMs associated with
      $x_-(\theta)$ and $x_+(\theta)$ via
      Lemma~\ref{lem:pm-fixed-points}, and $\mu_-$ and $\mu_+$ are
      extremal Gibbs measures in the full convex set of Gibbs measures.
\end{enumerate}
\end{theorem}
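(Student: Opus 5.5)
The plan is to reduce both parts to the correspondence between TISGMs and constant solutions of \eqref{68}, and then to invoke Lemma~\ref{lem:pm-fixed-points} and Proposition~\ref{prop:min-max-G} for extremality.

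\emph{Step 1 (bijection).} First I will check that constant solutions $(X,Y,Z,T,U,N)$ of \eqref{68} with positive entries are in one-to-one correspondence with positive fixed points of $F_2$. Given a fixed point $x$ of $F_2$, the last three equations of \eqref{69} determine $t,u,v$ uniquely. Then the second and third equations determine $y,z$, and the first equation closes the loop precisely because $x=F_2(x)$. Conversely, every positive constant solution yields $x=X^{1/2}$ with $x=F_2(x)$, by the substitution that defines \eqref{eq14}. Theorem~\ref{thm1} gives the correspondence between constant compatible boundary laws and TISGMs. Distinct $x$ give distinct $\Psi$-marginals at a single vertex of $\Gamma_0^2$, so the map $x\mapsto\mu_x$ is injective. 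Hence $\mathcal G_T^{(2)}$ is in bijection with the set of positive fixed points of $F_2$.

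\emph{Step 2 (part 1).} Under the hypothesis that $x=1$ is the only fixed point, the bijection of Step~1 gives $\mathcal G_T^{(2)}=\{\mu_0\}$ at once. I will also remark that in this case $\mu^-=\mu^+=\mu_0$ by Lemma~\ref{lem:pm-fixed-points}. By Proposition~\ref{prop:min-max-G}(3), the sandwich $\mu^-\preccurlyeq\mu\preccurlyeq\mu^+$ then collapses, so in fact the Gibbs measure is unique.

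\emph{Step 3 (part 2).} With exactly three fixed points $x_-<1<x_+$, Step~1 gives $\mathcal G_T^{(2)}=\{\mu_{x_-},\mu_0,\mu_{x_+}\}$, and these three measures are pairwise distinct. Lemma~\ref{lem:pm-fixed-points} identifies $\mu_{x_\pm}=\mu^\pm$ and asserts their extremality. If I want a self-contained argument for extremality, I will derive it from Proposition~\ref{prop:min-max-G}(3). Suppose $\mu^+=\lambda\nu_1+(1-\lambda)\nu_2$ with $\nu_i$ Gibbs and $0<\lambda<1$. Each $\nu_i\preccurlyeq\mu^+$, so for every increasing local function $G$ we have $\nu_i(G)\le\mu^+(G)$, while the convex combination forces equality. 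Increasing local functions (indicators of increasing cylinder events) determine measures on a product of finite totally ordered sets, so $\nu_i=\mu^+$. The case of $\mu^-$ is symmetric.

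The main obstacle is the injectivity and exhaustiveness in Step~1, namely that every TISGM arises from a \emph{constant} boundary law and that different $x$ give different measures. I would handle this with the standard normalization of boundary laws: the ratio $X$ equals $\mu(\sigma(x)=\tfrac12\mid\cdot)$-type likelihood ratios that are computable from the measure. This is the point where care with the $k$-th root substitution, i.e.\ positivity so that the root is unique, is needed.
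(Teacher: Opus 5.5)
Your proposal is correct and follows the same route as the paper. Part~1 comes from the correspondence between constant boundary laws and fixed points of $F_2$, part~2 from Lemma~\ref{lem:pm-fixed-points} together with the fact that the minimal and maximal Gibbs measures of Proposition~\ref{prop:min-max-G} are extremal, and you simply spell out what the paper leaves implicit or cites: injectivity of $x\mapsto\mu_x$ via the root marginal ratio $X=x^2$, and the standard sandwich argument that $\mu^\pm$ cannot be nontrivial convex combinations of Gibbs measures.
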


\begin{proof}
If $x=1$ is the only positive fixed point, then the corresponding
boundary law is unique, hence the only TISGM is $\mu_0$.

In the second case, Theorem~\ref{thm10} shows that
$x_-(\theta)$ and $x_+(\theta)$ are the minimal and maximal fixed
points. Lemma~\ref{lem:pm-fixed-points} identifies these with the
plus/minus TISGMs $\mu^\pm$, which we denote by $\mu_-$ and $\mu_+$
for brevity. Any other TISGM must correspond to $x=1$, giving $\mu_0$.

Extremality of $\mu_-$ and $\mu_+$ as Gibbs measures follows from the
general theory of attractive spin systems: for an attractive
nearest-neighbour specification on a tree, the minimal and maximal Gibbs
measures are extremal (see, \cite{G,Ro}). In our setting,
$\mu_-$ and $\mu_+$ are exactly these minimal and maximal measures by
Proposition~\ref{prop:min-max-G}, hence they belong to $\mathrm{ex}(G)$,
the set of extremal Gibbs measures.
\end{proof}

\begin{remark}
For $k=2$, numerical evaluation of $s_2(\theta)$ indicates that there is
a unique critical value $\theta_c\approx 1.47626$ such that
$s_2(\theta)\le 0$ for $\theta\le\theta_c$ and $s_2(\theta)>0$ for
$\theta>\theta_c$, see Figure~2.
Combined with Theorem~\ref{thm:dichotomy-k2}, this suggests the following
picture for the binary tree:
\[
  \begin{cases}
    \text{unique TISGM } \mu_0, & \theta\le\theta_c,\\[0.3em]
    \text{three TISGMs } \mu_-,\mu_0,\mu_+, & \theta>\theta_c,
  \end{cases}
\]
with $\mu_\pm$ extremal ordered phases. A rigorous proof that
\eqref{eq:scalar-k2} has exactly three positive solutions when
$s_2(\theta)>0$ would require a more detailed analysis of the rational
map $f_2(\cdot,\theta)$; we leave this as an interesting open problem.
\end{remark}

\section{Non-extremality of $\mu_0$}\label{sec4}

In this section, we derive a sufficient condition for the non-extremality of the disordered phase $\mu_0$ based on the Kesten-Stigum criterion.

Recall that a Gibbs measure is said to be extremal if it cannot be represented as a non-trivial convex combination of other Gibbs measures; otherwise, it is called non-extremal \cite{G}. A tree-indexed Markov chain \(X: V \to \Phi\) is defined by selecting \(X(x^{(0)})\) at the root according to \(\nu\), and then assigning values to each vertex \(v \neq x^{(0)}\) using the transition probabilities, given the value at its parent, independently of everything else (see \cite{G} for details).

We define the transition probability matrix $\mathcal{P} = (P_{ij})$ by
\[
P_{ij} = \frac{\exp\big(\beta J\, i\, j + \ddot{h}_j\big)}
{\sum\limits_{u \in \{-1,0,1\}} \exp\big(\beta J\, i\, u + \ddot{h}_u\big)},
\]
where $i \in \{-\tfrac{1}{2}, \tfrac{1}{2}\}$ and $j \in \{-1,0,1\}$. Introducing the
parameters
\[
Y = \exp(\ddot{h}_{-1} - \ddot{h}_0), \qquad
Z = \exp(\ddot{h}_{1} - \ddot{h}_0),
\]
the matrix $\mathcal{P}$ can be written in the form
\begin{equation}\label{eq28a}
\mathcal{P} =
\begin{pmatrix}
\dfrac{\theta^2 Y}{\theta^2 Y + \theta + Z} &
\dfrac{\theta}{\theta^2 Y + \theta + Z} &
\dfrac{Z}{\theta^2 Y + \theta + Z} \\[2ex]
\dfrac{Y}{Y + \theta + \theta^2 Z} &
\dfrac{\theta}{Y + \theta + \theta^2 Z} &
\dfrac{\theta^2 Z}{Y + \theta + \theta^2 Z}
\end{pmatrix}.
\end{equation}

Next, we define the transition matrix $\mathcal{Q} = (Q_{ij})$ by
\[
Q_{ij} = \frac{\exp\big(\beta J\, i\, j + \widehat{h}_j\big)}
{\sum\limits_{u \in \{\pm \frac{3}{2}, \pm \frac{1}{2}\}}
\exp\big(\beta J\, i\, u + \widehat{h}_u\big)},
\]
where $i \in \{-1,0,1\}$ and
$j \in \{-\tfrac{3}{2}, -\tfrac{1}{2}, \tfrac{1}{2}, \tfrac{3}{2}\}$.
Introducing the variables
\[
T = \exp(\widehat{h}_{-\frac{3}{2}} - \widehat{h}_{-\frac{1}{2}}), \quad
U = \exp(\widehat{h}_{\frac{1}{2}} - \widehat{h}_{-\frac{1}{2}}), \quad
V = \exp(\widehat{h}_{\frac{3}{2}} - \widehat{h}_{-\frac{1}{2}}),
\]
the matrix $\mathcal{Q}$ takes the form
\begin{equation}\label{eq29a} \mathcal{Q} = \begin{pmatrix} \frac{\theta^6T}{\theta^6T + \theta^4 + \theta^2U+V} & \frac{\theta^2}{\theta^6T + \theta^4 + \theta^2U+V} & \frac{\theta^2U}{\theta^6T + \theta^4 + \theta^2U+V} & \frac{V}{\theta^6T + \theta^4 + \theta^2U+V}\\ \frac{T}{T + 1 + U+V} & \frac{1}{T + 1 + U+V} & \frac{U}{T + 1 + U+V} & \frac{V}{T + 1 + U+V} \\ \frac{T}{T+\theta^2+\theta^4U+\theta^6V} & \frac{\theta^2}{T+\theta^2+\theta^4U+\theta^6V} & \frac{\theta^4U}{T+\theta^2+\theta^4U+\theta^6V} & \frac{\theta^6V}{T+\theta^2+\theta^4U+\theta^6V} \end{pmatrix}. \end{equation}

Finally, we introduce the transition matrix $\mathcal{R} = (R_{ij})$ defined by
\[
R_{ij} = \frac{\exp\big(\beta J\, i\, j + \tilde{h}_j\big)}
{\sum\limits_{u \in \{\pm \frac{1}{2}\}}
\exp\big(\beta J\, i\, u + \tilde{h}_u\big)},
\]
where $i \in \{-\tfrac{3}{2}, -\tfrac{1}{2}, \tfrac{1}{2}, \tfrac{3}{2}\}$ and
$j \in \{-\tfrac{1}{2}, \tfrac{1}{2}\}$.
Setting
\[
X = \exp(\tilde{h}_{\frac{1}{2}} - \tilde{h}_{-\frac{1}{2}}),
\]
the matrix $\mathcal{R}$ assumes the form
\begin{equation}\label{eq30a}
\mathcal{R} =
\begin{pmatrix}
\dfrac{\theta^3}{\theta^3 + X} & \dfrac{X}{\theta^3 + X} \\[1ex]
\dfrac{\theta}{\theta + X} & \dfrac{X}{\theta + X} \\[1ex]
\dfrac{1}{1 + \theta X} & \dfrac{\theta X}{1 + \theta X} \\[1ex]
\dfrac{1}{1 + \theta^3 X} & \dfrac{\theta^3 X}{1 + \theta^3 X}
\end{pmatrix}.
\end{equation}
Hence, any vector $v = (X, Y, Z, T, U, V) \in \mathbb{R}^6$ satisfying system~\eqref{68} defines a TISGM, which in turn induces a tree-indexed non-homogeneous Markov chain.

Note that the matrices \(\mathcal{P}\), \(\mathcal{Q}\), and \(\mathcal{R}\) are stochastic, meaning that each row sums to \(1\). By multiplying two stochastic matrices, one obtains another stochastic matrix. Consequently, the product of the three matrices \(\mathcal{P}\), \(\mathcal{Q}\), and \(\mathcal{R}\) also yields a stochastic matrix:
\begin{equation}\label{H}
\mathcal{H} = \mathcal{P}\mathcal{Q}\mathcal{R}.
\end{equation}



\paragraph{Kesten--Stigum criterion for the $3$--step channel.}
Recall that the level-to-level transitions are described by the stochastic matrices
$ \mathcal{P}:\Psi\to\Phi$, $ \mathcal{Q}:\Phi\to\Upsilon$, and $ \mathcal{R}:\Upsilon\to\Psi$, and that \eqref{H}
is the induced $\Psi\to\Psi$ transition matrix along a path of length $3$.
If we restrict attention to the subsequence of levels $\Psi_{0},\Psi_{3},\Psi_{6},\ldots$,
then each $\Psi$--vertex has exactly $k^3$ descendants at the next $\Psi$--level. Equivalently,
the broadcast process on the compressed tree (keeping only $\Psi$--levels) has branching factor $k^3$
and channel $H$.
Therefore, the Kesten--Stigum bound (see, e.g., \cite{KS}) yields a sufficient condition for
non-extremality (reconstruction) of the corresponding splitting Gibbs measure in terms of the
second eigenvalue $\lambda(H)$:
\begin{equation}\label{KS-3step}
k^{3}\,\lambda(H)^{2}\ >\ 1.
\end{equation}

Due to the complexity of the solutions of the system \eqref{68}, an explicit expression for this eigenvalue is rather cumbersome and lengthy.

We observe that the system \eqref{68} admits the solution
\[
X_0 = U_0 = 1,~Y_0 = Z_0 = \left(\frac{\theta^6 T_0 + \theta^4 + \theta^2 + T_0}{2\theta^3(T_0 + 1)}\right)^k,~V_0 = T_0 = \left(\frac{1 + \theta^3}{\theta^2 + \theta}\right)^k.
\]
The corresponding TISGM $\mu_0$ is referred
to as the \emph{disordered phase} of the model. Let $\lambda_{0,k}$ denote the second-largest eigenvalue (in absolute value) of the transition matrix \(\mathcal{H}\) associated with this solution.


\noindent
Let $\lambda_{0,k}=\lambda_{0,k}(\theta)$ denote the (non-trivial) eigenvalue of \eqref{H} in the
disordered solution. We set
\begin{equation}\label{def-gk}
g_k(\theta)\ :=\ k^{3}\,\lambda_{0,k}(\theta)^{2}-1.
\end{equation}
In particular, if $g_k(\theta)>0$, then \eqref{KS-3step} holds and the disordered splitting Gibbs
measure $\mu_0$ is non-extremal.



\begin{theorem}\label{thm15}
Let $k\ge 1$ and $\theta>1$. Consider the disordered translation-invariant splitting Gibbs measure
$\mu_0$ on the Cayley tree of order $k$, and let $\mathcal{H}$ be given by \eqref{H} which corresponds to $\mu_0$. If
\begin{equation}\label{thm15-KS}
k^{3}\,\lambda_{0,k}(\theta)^{2}\ >\ 1,
\end{equation}
equivalently $g_k(\theta)>0$ with $g_k$ defined by \eqref{def-gk}, then $\mu_0$ is {non-extremal}.
\end{theorem}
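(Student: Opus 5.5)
Our plan is to realize $\mu_0$ as a tree-indexed Markov chain, pass to the compressed tree of $\Psi$-levels, and apply the Kesten--Stigum theorem \cite{KS} to that compressed broadcast process.

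\textbf{Step 1: $\mu_0$ as a Markov chain.} Take the disordered solution $X_0=U_0=1$, $Y_0=Z_0$, $V_0=T_0$ of \eqref{68}. As described before \eqref{H}, the splitting Gibbs measure $\mu_0$ is the law of a tree-indexed, level-inhomogeneous Markov chain. Its transition matrices are $\mathcal P$ from $\Psi$-vertices to their children, $\mathcal Q$ from $\Phi$-vertices to theirs, and $\mathcal R$ from $\Upsilon$-vertices to theirs. Each of \eqref{eq28a}--\eqref{eq30a} is evaluated at the constant boundary law.

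To justify this, write the finite-volume marginal \eqref{eq2} with the compatible boundary law. Summing out the subtree below a vertex $v$ yields the factor $\exp(\beta J\,\xi(\mathrm{parent})\xi(v)+h_{\xi(v)})$, normalized over $\xi(v)$. This is exactly the stated entry $P_{ij}$, $Q_{ij}$ or $R_{ij}$, and children are conditionally independent given the parent.

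\textbf{Step 2: the compressed tree.} Restrict the chain to the vertices of $\Gamma_0^k$, i.e.\ the levels $0,3,6,\dots$. Each such vertex has $k^3$ descendants three generations below. By the Markov property, given the spin at a $\Psi$-vertex, the spins at these $k^3$ descendants are conditionally independent, and each has law given by the corresponding row of $\mathcal H=\mathcal P\mathcal Q\mathcal R$.

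Conditional independence fails only among descendants sharing an intermediate ancestor. So the compressed process is not literally a broadcast on a $k^3$-ary tree with i.i.d.\ edges. We handle this as follows. The Kesten--Stigum bound is a statement about the count statistic $S_n=\sum_{w}\phi(\sigma_w)$ over the $n$-th $\Psi$-level, where $\phi$ is the right eigenvector of $\mathcal H$ for $\lambda_{0,k}$. Its proof only needs two things. First, $\mathbb E[S_n\mid\sigma_{\rm root}]=k^{3n}\lambda_{0,k}^n\phi(\sigma_{\rm root})$, which holds by linearity. Second, $\mathrm{Var}(S_n)=O(k^{3n}\lambda_{0,k}^{2n})$ when $k^3\lambda_{0,k}^2>1$.

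\textbf{Step 3: the variance estimate.} We expand $\mathrm{Var}(S_n)$ as a sum over pairs $(w,w')$ according to the level of their last common ancestor. Any such ancestor is a $\Psi$-, $\Phi$- or $\Upsilon$-vertex, so the covariance decays like $\lambda_{0,k}^{(\text{distance})}$ up to bounded constants coming from $\mathcal P,\mathcal Q$ and $\mathcal P\mathcal Q$. Summing the resulting geometric series gives the bound exactly when $k^3\lambda_{0,k}^2>1$. This is the standard Kesten--Stigum second-moment computation, with constants depending only on $\theta$. I expect this bookkeeping, with ancestors at non-$\Psi$ levels, to be the main technical obstacle.

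\textbf{Step 4: conclusion.} The second-moment argument gives that $S_n/(k^{3n}\lambda_{0,k}^n)$ retains nonvanishing correlation with the root spin. Hence reconstruction holds: the root spin is not asymptotically independent of the configuration far away. By the standard equivalence between reconstruction for a tree-indexed Markov chain and non-extremality of the associated Gibbs measure (\cite{G}, \cite{Mossel}), $\mu_0$ is non-extremal.

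Finally, $\lambda_{0,k}$ is real. At the disordered solution the symmetry $s\mapsto -s$ makes $\mathcal H$ a symmetric-type $2\times 2$ stochastic matrix, with eigenvalues $1$ and $\mathcal H_{11}-\mathcal H_{21}$. So the condition \eqref{thm15-KS} is meaningful and equals $g_k(\theta)>0$.
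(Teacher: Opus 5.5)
Your argument is correct, and at the top level it is the paper's argument: represent $\mu_0$ as a level-periodic tree-indexed Markov chain with kernels $\mathcal P,\mathcal Q,\mathcal R$, compress to the $\Psi$-levels with branching $k^3$ and channel $\mathcal H=\mathcal P\mathcal Q\mathcal R$, and apply Kesten--Stigum. The difference is in the last step.

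The paper asserts that the compressed process \emph{is} a broadcast process on a $k^3$-ary tree with channel $\mathcal H$. It then cites the Kesten--Stigum bound as a black box, with no further proof. You correctly observe that this identification is not literal: descendants of a $\Psi$-vertex that pass through the same $\Phi$-child are conditionally correlated. You therefore carry out the second-moment computation directly. A last common ancestor at level $3j+1$ or $3j+2$ contributes $\lambda_{0,k}^{2(n-j-1)}$ times a bounded factor built from $\mathcal Q\mathcal R\phi$ or $\mathcal R\phi$. So only constants change, and the geometric series still converges when $k^3\lambda_{0,k}^2>1$.

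The paper's black box can also be rescued. The compressed process is a two-type Galton--Watson process with mean matrix $k^3\mathcal H$ and correlated offspring types, and the Kesten--Stigum limit theorem does not require independence among siblings. Your version is self-contained and shows exactly why $k^3\lambda_{0,k}^2$ is the right quantity. The paper's version is shorter but leaves this reduction unjustified.

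Two slips need correcting.
\begin{itemize}
\item In Step~2 the required estimate is $\mathbb{E}[S_n^2]=O\bigl(k^{6n}\lambda_{0,k}^{2n}\bigr)$, the square of the conditional mean. The bound you wrote, $O\bigl(k^{3n}\lambda_{0,k}^{2n}\bigr)$, is false, since the diagonal terms alone are of order $k^{3n}$. Your Steps~3--4 in fact use the correct scale.
\item The sentence asserting that the $k^3$ descendants are conditionally independent given the $\Psi$-vertex should be removed. Only their marginals are given by rows of $\mathcal H$, as you yourself note in the next sentence.
\end{itemize}
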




\begin{remark}\label{rk1}
Observe that the set of parameters for which the inequality $g_k(\theta)>0$ holds is nonempty, as illustrated in Fig.~\ref{fig115}.
\end{remark}
\begin{figure}[h]
\centering
\includegraphics[width=11cm]{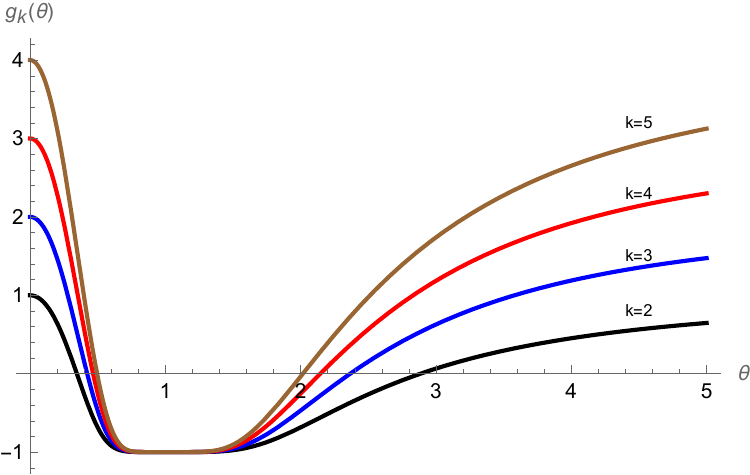}
\caption{The plots of $g_k(\theta)$ for $\theta \in (0,5)$ and $k=2,3,4,5$ are shown. It is observed that the interval of $\theta$ for which $g_k(\theta)>0$ increases as $k$ becomes larger.}\label{fig115}
\end{figure}

Combining Lemma~\ref{lem:pm-fixed-points} with the analysis of the disordered
phase in Section~5, we obtain the following picture for $k=2$. Recall that the
tree-indexed Markov chain associated with the disordered TISGM $\mu_0$ has
effective transition matrix $H(\theta,2)$ on the alphabet $\Psi$, with second
eigenvalue $\lambda_{0,2}$, and that the Kesten--Stigum quantity
\[
   g_2(\theta) := 8\,\lambda_{0,2}^2 - 1
\]
governs non-extremality of $\mu_0$ (see Theorem~\ref{thm15}). Whenever
$g_2(\theta)>0$ the disordered TISGM $\mu_0$ is non-extremal, whereas the ordered
TISGMs $\mu_-$ and $\mu_+$ remain extremal by
Theorem~\ref{thm:dichotomy-k2} and Proposition~\ref{prop:min-max-G}. In particular,
in the parameter region where both a phase transition (multiple fixed points of
$f_2(\cdot,\theta)$) and the KS condition $g_2(\theta)>0$ hold, the only extremal
translation-invariant splitting Gibbs measures are the ordered phases $\mu_-$ and
$\mu_+$, while the disordered phase $\mu_0$ admits a non-trivial ergodic
decomposition.

\begin{remark}\label{rk01} We finally comment on further extremality conditions for TISGMs. Several methods in the literature provide sufficient conditions for extremality based on finite-dimensional optimization of the transition matrix, including the percolation approach \cite{Martinelli,Mossel,M22}, the symmetric-entropy method \cite{FormentinKulske2009}, and bounds for Ising models with external fields \cite{Borgs2006}. However, in our setting the transition matrix depends on solutions with highly complex structure, which makes these methods difficult to apply, especially when explicit solutions are unavailable. Nevertheless, our results provide a basis for future numerical studies of extremality.
\end{remark}

\section{Conclusion}

In this work we investigated an Ising model with triple mixed spins
$(1/2,1,3/2)$ on a Cayley tree of arbitrary order $k\ge1$, where the spin
values are distributed periodically with period three along the generations.
Within the rigorous framework of splitting Gibbs measures, we derived the
exact boundary-law compatibility system and analyzed its translation-invariant
solutions. Recent studies have clarified that the recursive equation systems
obtained from the cavity method and from the Kolmogorov consistency
condition are in fact identical; in the present context, the constant solutions
of the boundary-law system are therefore in one-to-one correspondence with
translation-invariant Gibbs measures of the model \cite{akin20232,A24CSF}.

A first main outcome of our analysis is an explicit sufficient condition for
phase coexistence. In the ferromagnetic regime $J>0$, we reduced the problem
of translation-invariant splitting Gibbs measures to a scalar fixed-point
equation $x=f(x,\theta,k)$ for a rational map $f$ depending on the tree order
$k$ and the parameter $\theta=\exp(\beta J/2)$. The derivative
$s_k(\theta)=f'(1,\theta,k)-1$ at the disordered fixed point $x=1$ controls
the local stability of the corresponding TISGM $\mu_0$. We proved that
$s_k(\theta)>0$ implies the existence of at least three positive fixed points
of $f$, and hence at least three distinct TISGMs. This reveals a phase
transition driven by the periodic inhomogeneity of the spin structure.
Numerical computations indicate that the instability region
$\{\theta:s_k(\theta)>0\}$ expands with the tree order $k$, showing that
higher branching enhances the tendency towards multiplicity of TISGMs.

For the binary tree $k=2$ we went further and made systematic use of
attractiveness and TP2 of the ferromagnetic kernel. We constructed plus and
minus Gibbs measures as weak limits with maximal and minimal boundary
conditions, proved that these limits $\mu^+$ and $\mu^-$ are
translation-invariant splitting Gibbs measures with the same period-$3$
structure as the interaction, and showed that they are the minimal and
maximal Gibbs measures in the natural stochastic order. Using the boundary-law
representation, we identified $\mu^\pm$ with the extremal fixed points
$x^\pm(\theta)$ of the scalar recursion and established their extremality in
the full convex set of Gibbs measures. When the scalar equation admits
exactly three positive fixed points $x^-(\theta)<1<x^+(\theta)$, we obtained
a complete description of translation-invariant splitting Gibbs measures on
the binary tree:
\[
\mathcal G^{(2)}_{\mathrm{T}} = \{\mu^-,\mu_0,\mu^+\},
\]
with $\mu^\pm$ extremal ordered phases and $\mu_0$ the disordered phase.

A second main contribution concerns the extremality of the disordered phase.
For any TISGM we constructed the associated tree-indexed Markov chain on
$\Psi$-spins and derived the effective $2\times2$ transition matrix
$H(\theta,k)$ as a product of three level-wise stochastic matrices.
Applying the Kesten--Stigum criterion \cite{KS} to $H(\theta,k)$, we obtained a
non-extremality condition of the form $g_k(\theta)=k^3\lambda_{0,k}^2-1>0$,
where $\lambda_{0,k}$ is the second eigenvalue of $H(\theta,k)$.
This yields a nonempty parameter region in which the disordered TISGM
$\mu_0$ is non-extremal. For $k=2$, combining the Kesten--Stigum condition
with the classification of TISGMs shows that in the regime where both
$s_2(\theta)>0$ and $g_2(\theta)>0$ hold, the only extremal
translation-invariant splitting Gibbs measures are the ordered phases
$\mu^-$ and $\mu^+$, whereas the disordered phase $\mu_0$ decomposes into
a non-trivial mixture of extremal states.

Beyond existence and extremality, we also derived explicit thermodynamic
quantities associated with a TISGM. In particular, we expressed the finite­
volume free energy and the specific free energy, the sublattice magnetizations
on the three level classes of the tree, and the magnetization and susceptibility
along a typical ray in terms of the underlying translation-invariant boundary
law. These formulas make it possible to connect the probabilistic description
of Gibbs measures to observable thermodynamic behavior and to quantify the
impact of the period-$3$ inhomogeneity on macroscopic quantities.

Several open problems remain. For the binary tree, a complete rigorous
analysis of the global fixed-point structure of the rational map
$x\mapsto f(x,\theta,2)$, beyond local stability \cite{fmok}, would allow one to turn the
numerically observed picture of exactly three positive fixed points for
$s_2(\theta)>0$ into a theorem. For higher orders $k\ge3$, the classification
of TISGMs and the extremality of ordered phases remain largely open.
It would also be interesting to complement the Kesten--Stigum approach with
other extremality criteria, such as percolation-type bounds, symmetric-entropy
methods, or techniques tailored to models with external fields, in order to
sharpen the reconstruction/non-reconstruction threshold for the present
triple mixed-spin system. Finally, extensions to more general periodic spin
patterns, to models with competing interactions, or to irregular tree
structures (such as $(m,k)$-ary trees) represent promising directions for
future research (see \cite{QMA25}).

Overall, the analysis developed here contributes to the broader program of
understanding mixed-spin Ising models on non-amenable graphs. It highlights
how deterministic inhomogeneity at the microscopic level can be encoded in
boundary-law recursions and can produce rich phase diagrams, intricate
extremality properties, and nontrivial thermodynamic behavior on tree-like
structures \cite{ART,R15}.

\section*{Acknowledgments}
	The first author thanks the UAEU UPAR Grant No. G00004962 for support.

\section*{Declaration} The authors declare that they have no conflict of~interests.

\section*{Availability of data and material} Not applicable.

\appendix

\section{Proof of Proposition \ref{prop:min-max-G}}
Recall the spin sets at the three types of levels
\[
   \Psi = \Bigl\{-\tfrac12,\tfrac12\Bigr\}, \quad
   \Phi = \{-1,0,1\}, \quad
   \Upsilon = \Bigl\{-\tfrac32,-\tfrac12,\tfrac12,\tfrac32\Bigr\}
\]
assigned to $\Gamma^2_0,\Gamma^2_1,\Gamma^2_2$, respectively. The full
configuration space is
\[
   \Xi = \Omega_0\times\Omega_1\times\Omega_2,
\]
with $\Omega_i=S_i^{\Gamma^2_i}$ as in Section~2. :contentReference[oaicite:1]{index=1}

We equip each local spin set with the natural total order (e.g.\
$-\tfrac32<-\tfrac12<\tfrac12<\tfrac32$ in $\Upsilon$) and define a
coordinatewise partial order on configurations:
\[
   \xi \le \xi' \quad\Longleftrightarrow\quad
   \xi(x)\le\xi'(x)\ \text{for all }x\in V.
\]
This induces a stochastic order $\preccurlyeq$ on probability measures on
$\Xi$: $\nu\preccurlyeq\nu'$ if
\[
   \int F\,d\nu \;\le\; \int F\,d\nu'
   \quad\text{for every bounded increasing }F:\Xi\to\mathbb{R}.
\]

The two-site Boltzmann kernel along an edge $\langle x,y\rangle$ is
\[
   K_{xy}(s,t) = \exp(\beta J\,s t),\quad s\in S_x,\ t\in S_y,      \tag{$*$}
\]
where $S_x$ is $\Psi,\Phi,$ or $\Upsilon$ according to the level of $x$.
The kernel $K_{xy}$ is TP2 for each edge whenever $J>0$, by Lemma~4 of the
present paper.

By Theorem~5, TP2 of all edge kernels implies that the associated finite-volume
Gibbs specification is \emph{attractive}: if $\eta\le\xi$ are two boundary
configurations outside a finite region $\Lambda$, then for every bounded
increasing $F$ one has
\[
   \mathbb{E}_{\gamma_\Lambda(\cdot\mid\eta)}[F]
   \;\le\;
   \mathbb{E}_{\gamma_\Lambda(\cdot\mid\xi)}[F],
\]
where $\gamma_\Lambda(\cdot\mid\eta)$ denotes the Gibbs distribution in
$\Lambda$ with boundary condition $\eta$.
This is the standard FKG/monotonicity property for ferromagnetic specifications.

We now define the extremal boundary configurations. At each level we take
the maximal or minimal spin:
\[
   s_{\max}^{(0)}=\tfrac12,\quad  s_{\min}^{(0)}=-\tfrac12,\qquad
   s_{\max}^{(1)}=1,\quad      s_{\min}^{(1)}=-1,\qquad
   s_{\max}^{(2)}=\tfrac32,\quad s_{\min}^{(2)}=-\tfrac32.
\]
Define $\xi^+,\xi^-\in\Xi$ by
\[
   \xi^+(x)=
   \begin{cases}
      s_{\max}^{(0)}, & x\in\Gamma^2_0,\\[0.2em]
      s_{\max}^{(1)}, & x\in\Gamma^2_1,\\[0.2em]
      s_{\max}^{(2)}, & x\in\Gamma^2_2,
   \end{cases}
   \qquad
   \xi^-(x)=
   \begin{cases}
      s_{\min}^{(0)}, & x\in\Gamma^2_0,\\[0.2em]
      s_{\min}^{(1)}, & x\in\Gamma^2_1,\\[0.2em]
      s_{\min}^{(2)}, & x\in\Gamma^2_2.
   \end{cases}
\]
Clearly, $\xi^-$ is the minimal and $\xi^+$ the maximal configuration for the
partial order.

For each $n\ge1$, let $V_n=\bigcup_{m=0}^n W_m$ be the ball of radius $n$
and denote by $\gamma_{V_n}(\cdot\mid\eta)$ the finite-volume Gibbs
distribution in $V_n$ with boundary condition $\eta$ on $V\setminus V_n$.
We define
\[
   \mu^{+,n} := \gamma_{V_n}(\cdot\mid\xi^+),
   \qquad
   \mu^{-,n} := \gamma_{V_n}(\cdot\mid\xi^-).
\]
These are probability measures on the configuration space restricted to $V_n$;
we regard them as measures on $\Xi$ by fixing the spins equal to $\xi^\pm$
outside $V_n$.

Let $F:\Xi\to\mathbb{R}$ be any bounded increasing local function, i.e. $F$
depends only on spins in some finite set $\Lambda\subset V$. Choose $n$ large
enough so that $\Lambda\subset V_n$. For $m\ge n$, the random configuration
under $\mu^{+,m}$ induces a random boundary condition on $\Lambda^c$, and the
conditional distribution in $\Lambda$ is the Gibbs measure
$\gamma_\Lambda(\cdot\mid\eta)$ with some random $\eta\le\xi^+$ (since outside
$V_m$ the boundary is fixed to $\xi^+$, and all spins take values in their
ordered local sets).

By attractiveness, for every such $\eta$ we have
\[
   \mathbb{E}_{\gamma_\Lambda(\cdot\mid\eta)}[F]
   \;\le\;
   \mathbb{E}_{\gamma_\Lambda(\cdot\mid\xi^+)}[F].
\]
Taking expectation over the random $\eta$ produced by $\mu^{+,m}$ gives
\[
   \mathbb{E}_{\mu^{+,m}}[F]
   \;\le\;
   \mathbb{E}_{\gamma_\Lambda(\cdot\mid\xi^+)}[F]
   \;=\;
   \mathbb{E}_{\mu^{+,n}}[F].
\]
Thus the sequence $\bigl(\mathbb{E}_{\mu^{+,n}}[F]\bigr)_{n\ge1}$ is
\emph{decreasing} and bounded below (by $\inf F$), hence convergent.

An analogous argument with the minimal boundary condition $\xi^-$ shows that
$\bigl(\mathbb{E}_{\mu^{-,n}}[F]\bigr)_{n\ge1}$ is \emph{increasing} and
bounded above (by $\sup F$), hence convergent as well.

Since local bounded increasing functions form a determining class for the weak
topology on the product space with finite local spin sets, it follows that
there exist probability measures $\mu^+$ and $\mu^-$ on $\Xi$ such that
\[
   \mu^{+,n} \Longrightarrow \mu^+,
   \qquad
   \mu^{-,n} \Longrightarrow \mu^-,
\]
in the sense of weak convergence (i.e.\ convergence of finite-dimensional
distributions). This establishes the existence of $\mu^\pm$.

Each $\mu^{\pm,n}$ is a Gibbs measure (in finite volume) for the nearest-neighbour
Hamiltonian \eqref{eq1} and the same interaction potential; that
is, for every finite $\Lambda\subset V$ and every bounded local function $F$
depending only on spins in $\Lambda$, we have the DLR identity
\[
   \int F(\xi)\,d\mu^{\pm,n}(\xi)
   \;=\;
   \int\Biggl(
        \int F(\sigma_\Lambda\xi_{\Lambda^c})\,
             d\gamma_\Lambda(\sigma_\Lambda\mid\xi_{\Lambda^c})
       \Biggr)
       d\mu^{\pm,n}(\xi),
\]
where $\gamma_\Lambda(\cdot\mid\cdot)$ is the Gibbs specification associated
with the Hamiltonian (see e.g.\ \cite{G,Ro}).

Passing to the limit $n\to\infty$ and using:
\begin{itemize}
\item weak convergence of $\mu^{\pm,n}$ to $\mu^\pm$ on the left-hand side;
\item the bounded convergence theorem (the integrand is uniformly bounded and
      depends only on finitely many coordinates);
\end{itemize}
we conclude that the same identity holds with $\mu^\pm$ in place of
$\mu^{\pm,n}$. Hence $\mu^+$ and $\mu^-$ satisfy the DLR equations and are
Gibbs measures for the model. (This is the standard result that weak limits of
Gibbs measures with a fixed specification are again Gibbs; see, for instance,
\cite[Ch.~2]{Ro}.)

The boundary configurations $\xi^+$ and $\xi^-$ are invariant under all
automorphisms of the tree that preserve the level decomposition
$\Gamma^2_0,\Gamma^2_1,\Gamma^2_2$ (i.e.\ they are period-$3$ in the distance
from the root). Since the Hamiltonian is also invariant under these
automorphisms, the same symmetry holds for every $\mu^{\pm,n}$. Therefore
their weak limits $\mu^\pm$ inherit this invariance, so $\mu^-$ and $\mu^+$
are translation-invariant with period~$3$.

On a Cayley tree with nearest-neighbour interactions, any Gibbs measure for a
pair Hamiltonian can be represented as a \emph{splitting} Gibbs measure
(tree-indexed Markov chain) via boundary laws; this is classical and is
systematically developed in Rozikov's monograph \cite{Ro}. In our
paper we have already formulated this equivalence explicitly in Theorem~1,
which characterizes splitting Gibbs measures in terms of boundary-law
equations \eqref{67}.

Since $\mu^\pm$ are DLR measures for the same nearest-neighbour Hamiltonian,
they admit boundary laws satisfying the same compatibility equations, hence
are splitting Gibbs measures.

Finally, let $\mu$ be an arbitrary Gibbs measure for the model. Fix a finite
subset $\Lambda\subset V$ and a bounded increasing local function $F$ depending
only on $\Lambda$. By the DLR equations for $\mu$ we have
\[
   \mathbb{E}_\mu[F]
   \;=\;
   \int \Biggl(
        \int F(\sigma_\Lambda\xi_{\Lambda^c})\,
             d\gamma_\Lambda(\sigma_\Lambda\mid\xi_{\Lambda^c})
       \Biggr)
       d\mu(\xi).
\]
For each exterior configuration $\eta=\xi_{\Lambda^c}$, attractiveness of the
specification implies
\[
   \mathbb{E}_{\gamma_\Lambda(\cdot\mid\xi^-)}[F]
   \;\le\;
   \mathbb{E}_{\gamma_\Lambda(\cdot\mid\eta)}[F]
   \;\le\;
   \mathbb{E}_{\gamma_\Lambda(\cdot\mid\xi^+)}[F].
\]
Integrating with respect to $\mu(d\xi)$ over the exterior spins yields
\[
   \mathbb{E}_{\gamma_\Lambda(\cdot\mid\xi^-)}[F]
   \;\le\;
   \mathbb{E}_\mu[F]
   \;\le\;
   \mathbb{E}_{\gamma_\Lambda(\cdot\mid\xi^+)}[F].
\]
Now let $n\to\infty$ and use the definitions of $\mu^\pm$ as limits of
$\gamma_{V_n}(\cdot\mid\xi^\pm)$ to obtain
\[
   \mathbb{E}_{\mu^-}[F]
   \;\le\;
   \mathbb{E}_\mu[F]
   \;\le\;
   \mathbb{E}_{\mu^+}[F].
\]
Since this holds for every bounded increasing local function $F$, it is
equivalent to
\[
   \mu^- \;\preccurlyeq\; \mu \;\preccurlyeq\; \mu^+.
\]
Thus $\mu^-$ and $\mu^+$ are, respectively, the minimal and maximal Gibbs
measures for the model in the stochastic order $\preccurlyeq$.

This completes the proof.

\section{Proof of Lemma \ref{lem:pm-fixed-points}}

By Proposition~\ref{prop:min-max-G}, the measures $\mu^-$ and $\mu^+$ are
obtained as weak limits of finite-volume Gibbs measures with minus and plus
boundary conditions, respectively, and are Gibbs measures for the same
nearest-neighbour Hamiltonian. Moreover, they are invariant under all
automorphisms of the tree preserving the period-$3$ decomposition
$\Gamma^2_0,\Gamma^2_1,\Gamma^2_2$, and they admit boundary laws solving
\eqref{67}. Consequently, $\mu^-$ and $\mu^+$ are
translation-invariant splitting Gibbs measures.

By Proposition~\ref{prop:min-max-G}, the measures $\mu^-$ and $\mu^+$ are
translation-invariant splitting Gibbs measures. Hence, there exist unique fixed points
$x_-,x_+ $ of $f_2(x)$ such that
\[
   \mu^- = \mu_{x_-}, \qquad \mu^+ = \mu_{x_+}.
\]

We now show that the $\mu_{x_-}\preccurlyeq\mu_{x_+}$.

For each fixed point $x\in\{x_-,x_+\}$, the associated boundary law
at a vertex $z\in\Gamma^2_0$ (spin set $\Psi=\{-\tfrac12,\tfrac12\}$) can be
represented by a positive vector
\[
   m_x^\Psi = \bigl(m_x^\Psi(-\tfrac12),\,m_x^\Psi(\tfrac12)\bigr),
   \quad\text{with ratio}\quad
   \frac{m_x^\Psi(\tfrac12)}{m_x^\Psi(-\tfrac12)} = X
\]
for some $X>0$ related to $x$ by the change of variables
$x = \sqrt{2}\,X$ (since $k=2$). Analogous positive vectors
$m_x^\Phi$ and $m_x^\Upsilon$ describe the boundary laws on levels
$\Gamma^2_1$ and $\Gamma^2_2$, obtained from $(X,Y,Z,T,U,V)$ by the
standard normalization.

Recall the monotone likelihood ratio (MLR) order on positive vectors:
for a totally ordered finite set $S$, we say $m \preceq_{\mathrm{MLR}} m'$ if
$m'(s)/m(s)$ is increasing in $s\in S$. For $x_1<x_2$ we clearly have
\[
   m_{x_1}^\Psi \preceq_{\mathrm{MLR}} m_{x_2}^\Psi,
\]
since increasing $x$ increases the likelihood of the larger spin
$\tfrac12$ at level $\Gamma^2_0$.

On the other hand, the child-to-parent message update along an edge
$\langle x,y\rangle$ is given by the transform
\[
   (Km)(s) = \sum_{t} K(s,t)\,m(t),
\]
with $K(s,t)=\exp(\beta J st)$ the Ising kernel. Propositions~3 and 4
in the paper show that for $J>0$ this kernel is TP2 and that the transform
$m\mapsto Km$ preserves the MLR order: if $m \preceq_{\mathrm{MLR}} m'$,
then $Km \preceq_{\mathrm{MLR}} Km'$.

Starting from the root level $\Gamma^2_0$ and propagating the messages
downwards along the tree using the kernel transform (and products over
independent subtrees), we obtain all boundary-law vectors $m_x^S$,
$S\in\{\Psi,\Phi,\Upsilon\}$. Since each edge update preserves MLR order,
an easy induction on the distance from the root shows that
\[
   m_{x_1}^S \preceq_{\mathrm{MLR}} m_{x_2}^S
   \qquad\text{for all levels }S\in\{\Psi,\Phi,\Upsilon\}
   \text{ and all } x_1<x_2.
\]

The splitting Gibbs measure $\mu_x$ can be described as a tree-indexed
Markov chain whose initial distribution and transition kernels are obtained
from these boundary-law vectors (see, e.g., Rozikov's monograph on Gibbs
measures on Cayley trees). In particular, at each site the local
distribution under $\mu_x$ is an MLR-increasing function of $m_x^S$, and
the transition kernels along edges inherit the monotonicity of the kernel
$K$. Using a standard monotone coupling argument for tree-indexed Markov
chains (constructing spins level-by-level, starting from the root), one
obtains a joint coupling of $(\mu_{x_1},\mu_{x_2})$ such that
\[
   \xi^{(1)} \le \xi^{(2)} \quad\text{coordinatewise a.s. whenever }x_1<x_2.
\]
Therefore
\[
   x_1<x_2 \quad\Longrightarrow\quad
   \mu_{x_1} \;\preccurlyeq\; \mu_{x_2}.
\]
Hence, standard technique \cite{G} we can infer that the measures $\mu^-$ and $\mu^+$ are extremal.

The lemma is proved.

\end{document}